\theoremstyle{plain}
  \newtheorem{theorem}{Theorem}
  \newtheorem{proposition}[theorem]{Proposition}
  \newtheorem{lemma}[theorem]{Lemma}
\theoremstyle{definition}
  \newtheorem{definition}[theorem]{Definition}
  \newtheorem{example}[theorem]{Example}
\theoremstyle{remark}
  \newtheorem{remark}[theorem]{Remark}
  \newtheorem*{remark*}{Remark}
\DeclareMathAlphabet{\mathcal}{OMS}{cmsy}{m}{n}
\newcommand{\A}{\mathbb{A}}
\newcommand{\C}{\mathbb{C}}
\newcommand{\E}{\mathbb{E}}
\newcommand{\N}{\mathbb{N}}
\newcommand{\Q}{\mathbb{Q}}
\newcommand{\Z}{\mathbb{Z}}
\newcommand{\cala}{\mathcal{A}}
\newcommand{\calo}{\mathcal{O}}
\newcommand{\spec}{\mathrm{Spec}}
\newcommand{\Id}{\mathrm{Id}}
\newcommand{\SL}{{\mathrm{SL}}}
\newcommand{\lra}{{\, \longrightarrow \,}}
\newcommand{\paren}[1]{\mathopen{}\left(#1\right)\mathclose{}}
\newcommand{\set}[1]{\mathopen{}\left\{#1\right\}\mathclose{}}
\newcommand\restr[2]{{% we make the whole thing an ordinary symbol
  \left.\kern-\nulldelimiterspace % automatically resize the bar with \right
  #1 % the function
  %\vphantom{\big|} % pretend it's a little taller at normal size
  \right|_{#2} % this is the delimiter
  }}
\newcommand{\Mid}{\,\middle|\,}
\DeclareRobustCommand{\okina}{%
  \raisebox{\dimexpr\fontcharht\font`A-\height}{%
    \scalebox{0.8}{`}%
  }%
}
    \def\@thm#1#2#3{%
      \ifhmode
        \unskip\unskip\par
      \fi
      \normalfont
      \trivlist
      \let\thmheadnl\relax
      \let\thm@swap\@gobble
      \let\thm@indent\indent % indent
      \thm@headfont{\scshape}% heading font small caps
      \thm@notefont{\fontseries\mddefault\upshape}%
      \thm@headpunct{.}% add period after heading
      \thm@headsep 5\p@ plus\p@ minus\p@\relax
      \thm@space@setup
      #1% style overrides
      \@topsep \thm@preskip               % used by thm head
      \@topsepadd \thm@postskip           % used by \@endparenv
      \def\dth@counter{#2}%
      \ifx\@empty\dth@counter
        \def\@tempa{%
          \@oparg{\@begintheorem{#3}{}}[]%
        }%
      \else
        \H@refstepcounter{#2}%
        \hyper@makecurrent{#2}%
        \let\Hy@dth@currentHref\@currentHref
        \AddToHookNext{para/begin}{\MakeLinkTarget*{\Hy@dth@currentHref}}%
        \def\@tempa{%
          \@oparg{\@begintheorem{#3}{\csname the#2\endcsname}}[]%
        }%
      \fi
      \@tempa
    }%
\@clubpenalty \everypar{}%
\newcommand{\subalign}[1]{%
  \vcenter{%
    \Let@ \restore@math@cr \default@tag
    \baselineskip\fontdimen10 \scriptfont\tw@
    \advance\baselineskip\fontdimen12 \scriptfont\tw@
    \lineskip\thr@@\fontdimen8 \scriptfont\thr@@
    \lineskiplimit\lineskip
    \ialign{\hfil$\m@th\scriptstyle##$&$\m@th\scriptstyle{}##$\hfil\crcr
      #1\crcr
    }%
  }%
}
\title[A positive Siegel theorem]{A positive Siegel theorem \\
\vspace*{0.1in}
{\tiny Dynkin friezes and positive Mordell--Schinzel}}
\author[Robin Zhang]{Robin Zhang}
\address{Department of Mathematics, Massachusetts Institute of Technology}
\email{robinz@mit.edu}
\date{April 28, 2025}
\begin{document}

\begin{abstract}
  We determine the number of positive integral points on $n$-dimensional affine varieties associated to arbitrary $n \times n$ generalized Cartan matrices. An application to the theory of cluster algebras and combinatorics is the resolution of the Fontaine--Plamondon conjecture, which says that there are exactly $4400$ and $26952$ positive integral friezes of type $E_7$ and $E_8$ respectively. An application to number theory refines and generalizes theorems of Mohanty, Mordell, and Schinzel to the positive integers and higher dimensions by exhibiting examples of Diophantine equations $xyz = G(x, y)$ and $xyzw = G(x, y, z)$ of every degree greater than $3$ with infinitely many positive integral solutions.
\end{abstract}

\maketitle

\setcounter{tocdepth}{1}
\tableofcontents

%%%%%%%%%%%%%%%%%%%%%%%%%%%% Introduction %%%%%%%%%%%%%%%%%%%%%%%%%%%%

\vspace*{-.2in}

\section{Introduction}
\subsection*{Counting positive integral points}
General finiteness problems about integral and rational points on affine varieties
date back to at least 1901, when Poincar\'{e} \cite{poincare} asked whether
the group $E(\Q)$ of $\Q$-rational points on each elliptic curve $E$
has only finitely many generators.
Poincar\'{e}'s problem was affirmatively
resolved by Mordell \cite{mordell-1922} in 1922,
generalized by Weil's thesis \cite{weil-1929} in 1929 to number fields,
and generalized by Faltings \cite{faltings-1983,faltings-1983-erratum} in 1983
to curves of higher genus.
\begin{theorem}[Mordell--Weil and Faltings]
\label{thm:mordell-weil-faltings}
  Let $K$ be a number field with ring of integers $\calo_K$
  and let $X$ be a smooth projective curve of genus $g$ defined over $K$.
  \begin{enumerate}[(a)]
    \item If $g = 0$, then $\#X(K) = 0$ or $\infty$.
    \item If $g = 1$, then $X(K)$ is a finitely-generated abelian group.
    \item If $g \geq 2$, then $X(K)$ is finite.
  \end{enumerate}
\end{theorem}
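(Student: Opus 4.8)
The plan is to treat the three cases separately, in increasing order of depth; only the last requires machinery beyond the classical. For (a), I would use that Riemann--Roch makes the anticanonical class of $X$ rational over $K$, of degree $2$, and very ample, realizing $X$ as a smooth plane conic in $\P^2_K$. If $X(K) = \emptyset$ there is nothing to prove; otherwise, projecting the conic away from a chosen point of $X(K)$ yields an isomorphism $X \cong \P^1_K$, and $\#\P^1(K) = \infty$ because $K$ is infinite, so $\#X(K) \in \{0,\infty\}$.

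For (b), if $X(K) = \emptyset$ there is again nothing to prove; otherwise I would fix $O \in X(K)$, regard $(X,O)$ as an elliptic curve $E/K$ with $X(K) = E(K)$ a group, and run the classical two-step descent. First comes \emph{weak Mordell--Weil}: for a fixed integer $m \geq 2$, the group $E(K)/mE(K)$ is finite. Passing to $L := K(E[m])$, the Kummer sequence embeds $E(L)/mE(L)$ into $H^1(L,E[m])$, and one shows the image lies in the subgroup of classes unramified outside the finite set $S$ of places of $L$ dividing $m\infty$ or of bad reduction; this subgroup is finite because $\calo_{L,S}^\times$ is finitely generated (Dirichlet's unit theorem) and the $S$-class group of $L$ is finite (Minkowski), and a restriction--corestriction argument pushes finiteness back down to $K$. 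Then comes the theory of heights: the N\'eron--Tate canonical height $\hat h\colon E(K)\to\R_{\geq 0}$ is a positive semidefinite quadratic form with the Northcott property $\#\{P : \hat h(P)\leq B\}<\infty$. Finally, picking coset representatives $R_1,\dots,R_k$ of $E(K)/mE(K)$ and writing an arbitrary $P$ as $mQ+R_i$, the identity $\hat h(mQ)=m^2\hat h(Q)$ and the parallelogram law give $\hat h(Q)<\hat h(P)$ whenever $\hat h(P)$ exceeds a constant depending only on the $R_i$; iterating this descent exhibits $E(K)$ as generated by the $R_i$ together with the finitely many points of bounded height.

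Part (c) is Faltings's theorem, where essentially all the difficulty lies, and I would follow Faltings's original strategy. The key reduction is \emph{Parshin's trick} (the Kodaira--Parshin construction): to each $P \in X(K)$ one functorially attaches a finite cover $Y_P \to X$ branched only above $P$, of bounded degree and ramification and defined over an extension $K_P/K$ of bounded degree, so that $\mathrm{Jac}(Y_P)$ is a principally polarized abelian variety of a fixed dimension $g'$ (depending only on $g$) with good reduction outside a fixed finite set of places. Since the $K_P$ range over finitely many number fields and $P \mapsto Y_P$ has finite fibers, an infinite set $X(K)$ would produce infinitely many pairwise-nonisomorphic such abelian varieties, contradicting the \emph{Shafarevich conjecture}: for fixed dimension, fixed number field, and fixed finite set $S$, there are only finitely many principally polarized abelian varieties with good reduction outside $S$.

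Thus the whole theorem reduces to the Shafarevich conjecture, whose proof is the genuine obstacle; here I would reproduce Faltings's argument, built around the \emph{stable Faltings height} $h_F$ of an abelian variety and comprising three ingredients: (i) the \emph{Tate conjecture} over a number field $K$, namely $\Hom_K(A,B)\otimes\Z_\ell \xrightarrow{\sim} \Hom_{\gal(\bar K/K)}(T_\ell A,T_\ell B)$, together with semisimplicity of $V_\ell A$, both proved by a delicate analysis of how $h_F$ varies along $\ell$-power isogenies; (ii) a Northcott-type finiteness on the moduli space $\mathcal A_{g',1,N}$ of principally polarized abelian varieties with level-$N$ structure --- good reduction outside $S$ forces the associated moduli point to be $S$-integral of bounded $h_F$, and a scheme of finite type has only finitely many integral points of bounded height and bounded degree; and (iii) finiteness of each $K$-isogeny class, which follows by combining (i) with the near-invariance of $h_F$ under isogeny. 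Assembling (i)--(iii) yields the Shafarevich conjecture, and with it the theorem. I expect ingredient (i) --- the Tate conjecture and, beneath it, the precise control of the Faltings height along isogenies --- to be by far the hardest step. (Alternatively one could cite Vojta's later proof via Arakelov-theoretic Diophantine approximation, Bombieri's simplification of it, or the $p$-adic Hodge-theoretic argument of Lawrence--Venkatesh, but the skeleton above is the most economical.)
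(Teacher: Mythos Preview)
Your outline is a faithful and well-organized sketch of the classical proofs of all three parts, and I see no genuine gap in it. However, there is nothing to compare it against: the paper does not prove this theorem. Theorem~\ref{thm:mordell-weil-faltings} is stated purely as historical background in the introduction, with attributions to Mordell~\cite{mordell-1922}, Weil~\cite{weil-1929}, and Faltings~\cite{faltings-1983,faltings-1983-erratum}, and the paper immediately moves on to Siegel's theorem and then to its own results on affine varieties of cluster algebra type. No proof, sketch, or argument for Mordell--Weil or Faltings appears anywhere in the paper.
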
 
Long before Faltings's proof of Theorem \ref{thm:mordell-weil-faltings}(c),
Siegel \cite{siegel}\footnote{For an English translation, see \cite{siegel-translated}.}
used Theorem \ref{thm:mordell-weil-faltings}(b)
and the Thue--Siegel--Roth theorem from the theory of Diophantine approximations
to prove an integral version of Theorem \ref{thm:mordell-weil-faltings}(c)
in 1929.
\begin{theorem}[Siegel's theorem on integral points]
  \label{thm:siegel}
  Let $K$ be a number field with ring of integers $\calo_K$
  and let $X$ be a smooth affine curve of genus $g$ defined over $K$.
    If $g \geq 1$, then $X(\calo_K)$ is finite.
\end{theorem}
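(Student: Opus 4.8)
\emph{Overview.} The plan is to follow the classical route to Siegel's theorem, feeding Theorem~\ref{thm:mordell-weil-faltings}(b) and the Thue--Siegel--Roth theorem into a covering argument, set up so that the one genuinely delicate point stands out at the end. \emph{Reductions first:} if $X(K)=\emptyset$ there is nothing to prove, so assume $X(K)\neq\emptyset$, let $C$ be the smooth projective model of $X$ (genus $g\geq 1$), and write $X=C\setminus S_\infty$ with $S_\infty$ finite and nonempty (nonempty because $X$ is affine), so $C(K)\supseteq X(K)\neq\emptyset$. Since passing to a finite extension of $K$ and to a ring of $S$-integers $\calo_{K,S}$ only enlarges the point set, it suffices to bound $X(\calo_{K,S})$ for convenient $K,S$; so I would take $S$ to contain the archimedean places and the places of bad reduction, arrange $S_\infty$ to be $K$-rational, fix a base point $O\in C(K)$ and the resulting Abel--Jacobi embedding $j\colon C\hookrightarrow J:=\mathrm{Jac}(C)$ over $K$, fix a symmetric ample class on $J$ with N\'{e}ron--Tate height $\hat h$ (so $\hat h([m]x)=m^2\hat h(x)$), fix \emph{once and for all} a projective embedding $J\hookrightarrow\P^N_K$, and fix a Weil height $h$ on $C$, a nonconstant $\phi$ in the coordinate ring of $X$ (whose polar divisor on $C$ is supported on $S_\infty$), and for each place $v$ and point $Q$ a logarithmic $v$-adic distance $-\log d_v(\cdot,Q)\geq 0$.

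\emph{The covering.} For each $m\geq 1$ I would form $C_m:=[m]^{-1}(j(C))\subseteq J$, with $\pi_m\colon C_m\to C$ the map induced by the isogeny $[m]\colon J\to J$, so that $[m](P')=j(\pi_m(P'))$ in $J$ for $P'\in C_m$; since $[m]$ is \'{e}tale, $\pi_m$ is finite \'{e}tale. By Theorem~\ref{thm:mordell-weil-faltings}(b), $J(K)$ is finitely generated, hence $J(K)/[m]J(K)$ is finite; since any two $\pi_m$-preimages of a point of $C(K)$ differ by an element of $J[m]$, \emph{every} point of $C(K)$ lifts under $\pi_m$ to a point defined over a single number field $L_m$ (generated over $K$ by $J[m]$ and finitely many $[m]$-division points). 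Because $\pi_m$ is finite \'{e}tale, the Chevalley--Weil theorem then furnishes a single finite set of places $S'$ of $L_m$ such that every such lift of a point of $X(\calo_{K,S})$ is $S'$-integral on $\pi_m^{-1}(X)$.

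\emph{Heights, distances, and Roth.} Suppose $X(\calo_{K,S})$ is infinite and choose distinct $P_1,P_2,\dots$; by Northcott's theorem $h(P_i)\to\infty$, so $h(\phi(P_i))\to\infty$ as well. As $\phi(P_i)$ is an $S$-integer, $h(\phi(P_i))=\sum_{v\in S}\log^+\|\phi(P_i)\|_v$, so by pigeonhole (first over $v\in S$, then over the poles of $\phi$) I may pass to a subsequence on which, for a fixed place $v_0\in S$ and a fixed $Q\in S_\infty$, the $P_i$ tend $v_0$-adically to $Q$ and
\[
  -\log d_{v_0}(P_i,Q)\;\geq\;c\,h(P_i)-O(1),\qquad c:=\tfrac{1}{2\#S}>0 .
\]
Now I would fix $\epsilon>0$ small and then $m$ large enough that $(2+\epsilon)\gamma/m^2<c$, where $\gamma$ is the absolute constant appearing below; build $C_m,\pi_m,L_m,S'$ as above; lift the $P_i$ to points $P_i'\in\pi_m^{-1}(X)(\calo_{L_m,S'})$; and pass to a further subsequence so that $P_i'\to Q'$ $v_0'$-adically for a fixed place $v_0'\mid v_0$ of $L_m$ and a fixed $Q'\in\pi_m^{-1}(Q)$. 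Since $\pi_m$ is \'{e}tale at $Q'$, a uniformizer at $Q$ pulls back to a uniformizer at $Q'$, whence $-\log d_{v_0'}(P_i',Q')=-\log d_{v_0}(P_i,Q)+O(1)$ (heights normalized consistently for $K\subset L_m$). Pick a coordinate function $\xi$ on the fixed projective model, restricted to $C_m$, that is a local coordinate at $Q'$, and set $\beta:=\xi(Q')\in\bar K$; then $-\log d_{v_0'}(P_i',Q')=-\log\|\xi(P_i')-\beta\|_{v_0'}+O(1)$, the Thue--Siegel--Roth theorem applied to $\beta$ and $\xi(P_i')\in L_m$ gives $-\log\|\xi(P_i')-\beta\|_{v_0'}\leq(2+\epsilon)\,h(\xi(P_i'))+O_\epsilon(1)$ for $i$ large, and
\[
  h(\xi(P_i'))\;\leq\;\gamma_1\,\hat h(P_i')+O(1)\;=\;\gamma_1\,m^{-2}\,\hat h(j(P_i))+O(1)\;\leq\;(\gamma/m^2)\,h(P_i)+O(1),
\]
using $\hat h([m]x)=m^2\hat h(x)$ and $\hat h(j(\cdot))=O(h(\cdot))$ on $C$, with $\gamma_1,\gamma$ \emph{independent of $m$} precisely because the embedding $J\hookrightarrow\P^N$ was fixed in advance. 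Chaining the displayed bounds gives $c\,h(P_i)-O(1)\leq(2+\epsilon)(\gamma/m^2)\,h(P_i)+O_\epsilon(1)$ for all large $i$ in the subsequence; since $(2+\epsilon)\gamma/m^2<c$, this bounds $h(P_i)$, contradicting $h(P_i)\to\infty$. Hence $X(\calo_{K,S})$, and therefore $X(\calo_K)$, is finite.

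\emph{Expected main obstacle.} The three external inputs --- Mordell--Weil, Chevalley--Weil, and Thue--Siegel--Roth --- are used as black boxes; the real work is the covering construction and the attendant bookkeeping. The hard part is twofold: first, descending the $m$-division lifts of \emph{all} integral points simultaneously to one pair $(L_m,S')$, which is exactly where the finiteness of $J(K)/[m]J(K)$ and the Chevalley--Weil theorem are needed; and second, ensuring that the height drops by the clean factor $m^{-2}$ \emph{uniformly in $m$} while the $v_0$-adic distance to the limit point is preserved up to $O(1)$. The second point is what forces one to feed Roth coordinate functions pulled back from a \emph{fixed} projective model of $J$ --- whose heights are governed by $\hat h$ and hence contract by $m^{-2}$ on the covering --- rather than intrinsic functions on $C_m$, whose degrees, and thus their contribution to $h(\xi(P_i'))$, blow up with $m$ once $g\geq 2$ and would leave no net gain. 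Once the gain is genuinely $O(m^{-2})$, taking $m$ large defeats Roth's exponent $2+\epsilon$ against the positive constant $c$ forced by integrality; and when $g=1$ the argument simplifies, since $J=C$ and no Jacobian or Abel--Jacobi map is required.
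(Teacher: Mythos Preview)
The paper does not prove Theorem~\ref{thm:siegel}; it is quoted as classical background, with only the one-line remark that Siegel's argument combines Theorem~\ref{thm:mordell-weil-faltings}(b) with the Thue--Siegel--Roth theorem. Your outline is a correct rendition of precisely that classical route---the multiplication-by-$m$ cover of the Jacobian, Mordell--Weil plus Chevalley--Weil to land all lifts in a single pair $(L_m,S')$, and Roth applied to a coordinate pulled from a \emph{fixed} projective model of $J$ so that heights genuinely contract by $m^{-2}$---so there is nothing in the paper to compare against beyond noting that your sketch matches the standard textbook proof the paper alludes to.
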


The theorems of Siegel and Faltings were
first posed as open questions at the end of
Mordell's 1922 paper \cite{mordell-1922}.
In this article, we consider
Mordell's question over the positive integers $\N$.

The main Diophantine application of this paper
is a positive refinement and higher-dimensional generalization
of the Mordell--Schinzel program
\cite{mordell-1952,schinzel-2015,schinzel-2018,kollar-li},
which looks for infinitely many integral solutions
to Diophantine equations of the form
\begin{equation}
  \label{eq:mordell-schinzel}
  xyz = A(x) + B(y)
\end{equation}
for polynomials $A \in \Z[x]$ and $B \in \Z[y]$.
\begin{theorem}[A Diophantine application of Theorem {\ref{thm:positive-siegel}}]
  \label{thm:rank-2-3}
  Let $a$, $b$, $c$, and $d$ be positive integers.
  \begin{enumerate}[(a)]
    \item There are infinitely many positive integer solutions
    $(x, y, z)$ to the equation
    \begin{align*}
      x y z &= (x^a + 1)^b + y
    \end{align*}
    if and only if $ab \geq 4$.
    Furthermore if $ab = 1$, $2$ or $3$,
    then the number of positive integer solutions
    is $5$, $6$, or $9$ respectively.
  \item There are infinitely many positive integer solutions
    $(x, y, z, w)$ to the equation
    \begin{align*}
      x y z w &= (x^a + 1)^b y + (x^c + 1)^d z
    \end{align*}
    if and only if $abcd \geq 3$.
    Furthermore if $abcd = 1$ or $2$,
    then the number of positive integer solutions
    is $14$ or $20$ respectively.
  \end{enumerate}
\end{theorem}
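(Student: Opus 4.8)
The plan is to derive Theorem~\ref{thm:rank-2-3} from Theorem~\ref{thm:positive-siegel} by specialising to the generalized Cartan matrices whose associated affine varieties are the displayed hypersurfaces. The first step is to set up this correspondence: the combinatorial shape of $xyz = G(x,y)$ (resp.\ $xyzw = G(x,y,z)$) dictates the shape of the diagram, the exponents recording the off-diagonal Cartan entries along the edges, and a positive integer solution of the equation being nothing other than a point of $X_C(\N)$. In part~(a) the relevant $C$ has rank $2$, with a single edge of off-diagonal product $ab$; in part~(b) the two right-hand summands show that $x$ is joined to both $y$ and $z$ while $y$ and $z$ are never joined, so $C$ has rank $3$ with a path-shaped diagram whose two edges carry off-diagonal products $ab$ and $cd$.

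Granting the dictionary, Theorem~\ref{thm:positive-siegel} supplies the dichotomy: $\#X_C(\N)$ is finite exactly when $C$ is of finite type, in which case it equals the number of positive integral friezes of type $C$, computed there in closed form. So the statement reduces to the classification of rank-$2$ and rank-$3$ generalized Cartan matrices together with the resulting frieze counts.

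For part~(a): a connected rank-$2$ matrix is of finite type precisely when $ab \le 3$, the types being $A_2$ ($ab=1$), $B_2 \cong C_2$ ($ab=2$), and $G_2$ ($ab=3$), with frieze counts $\tfrac{1}{4}\binom{6}{3} = 5$, $\binom{4}{2} = 6$, and $9$; when $ab = 4$ it is of affine type and when $ab \ge 5$ of indefinite type, so $\#X_C(\N) = \infty$ in both cases, which yields the ``if and only if''. For part~(b): since $y$ and $z$ are never joined, $C$ is a path on three nodes, and such a matrix is of finite type precisely when $\{ab, cd\}$ equals $\{1,1\}$ or $\{1,2\}$, i.e.\ precisely when $abcd \le 2$, the types then being $A_3$ ($abcd = 1$) and $B_3 \cong C_3$ ($abcd = 2$) with frieze counts $\tfrac{1}{5}\binom{8}{4} = 14$ and $\binom{6}{3} = 20$; since these are the only connected finite Dynkin diagrams on three nodes arising from an integer matrix, $abcd \ge 3$ forces affine or indefinite type and hence $\#X_C(\N) = \infty$.

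The substantive step is the first. One must extract from the definition of $X_C$ in the body of the paper the precise defining equation and check that, for each relevant $C$, it matches---on the nose, after the right normalisation of coordinates---the Mordell--Schinzel hypersurface in the statement, with the map between positive solutions and friezes being the tautological one; only then do the finite counts come out as exactly $5,6,9$ and $14,20$ rather than something shifted. Once that identification is secured, the remaining inputs---the trichotomy of generalized Cartan matrices of rank at most $3$ into finite, affine, and indefinite type, and the closed-form frieze counts for the finite types $A_n$, $B_n$, and $G_2$---are supplied directly by Theorem~\ref{thm:positive-siegel} and the Cartan--Killing classification, and the rest is bookkeeping.
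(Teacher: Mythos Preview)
Your outline matches the paper's proof almost exactly: pick the rank-$2$ (resp.\ rank-$3$ path) generalized Cartan matrix with the indicated off-diagonal entries, invoke Theorem~\ref{thm:positive-siegel}, and read off the finite-type counts from \nameref{sec:table-A}. The paper carries out the ``substantive step'' you flag by substituting $x_1 = (x_2^a+1)/y_1$ (and, in rank~$3$, also $x_3 = (x_2^c+1)/y_3$) into the middle frieze equation, arriving first at $xy^bz = (x^a+1)^b + y^b$ and then specializing to the displayed equation via $y \mapsto y^b$; the injection this produces is what drives the infinitude direction.

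There is one genuine slip. You write ``$B_3 \cong C_3$ ($abcd = 2$) with frieze count $\binom{6}{3} = 20$'', but $B_3$ and $C_3$ are \emph{not} isomorphic for these purposes: by \nameref{sec:table-A}, $\#X_{C_3}(\N) = \binom{6}{3} = 20$ while $\#X_{B_3}(\N) = \binom{6}{3} + \binom{3}{3} = 21$. The Langlands-dual pair $B_n,C_n$ coincide only at $n=2$. For $abcd = 2$ the type of $C = \begin{psmallmatrix}2&-a&0\\-b&2&-d\\0&-c&2\end{psmallmatrix}$ depends on where the asymmetry sits, not just on the product: $(a,b,c,d) = (2,1,1,1)$ gives $C_3$ but $(1,2,1,1)$ gives $B_3$. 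So the identification you promise to check ``on the nose'' really does require care here---the dictionary between positive solutions of the displayed hypersurface and points of $X_C(\N)$ is a clean bijection when $b = d = 1$ (the substitution is then just a relabeling), which lands you in $A_3$ or $C_3$ and gives $14$ and $20$; for other $(a,b,c,d)$ with the same product it is only the injection coming from $y \mapsto y^b$, $z \mapsto z^d$, and you cannot simply quote the frieze count.
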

\begin{remark}
  The $b = 1$ case of Theorem \ref{thm:rank-2-3}(a)
  gives a refinement and new proof of
  both \cite[Theorem 2]{mohanty-1977}
  and \cite[Theorem 3]{schinzel-2015}.
\end{remark}

The surface in Theorem \ref{thm:rank-2-3}(a)
has arithmetic genus $g \geq 1$ precisely when
$ab \geq 4$, which suggests a higher-dimensional
analogue of Siegel's theorem.
The proof of Theorem \ref{thm:rank-2-3}
is a direct application of such a generalization.
We define an $n$-dimensional affine variety $X$
to be of \textit{cluster algebra type} $\cala$
if $X$ is isomorphic to the zero locus of $n$ polynomials
that are associated to $\cala$
(see Definition \ref{def:cluster-algebra-type}
for details).
We will restrict our attention to cluster algebras
with trivial coefficients
associated to an $n \times n$ generalized Cartan matrix $C = (c_{i,j})$,
in which case the polynomials associated to $C$
can be taken to be the following for each $i \in \{1, \ldots, n\}$:
\[
  f_{C, i}
    := x_i y_i
      - \prod_{j = 1}^{i - 1} x_j^{-c_{j, i}}
      - \prod_{j = i + 1}^n x_j^{-c_{j, i}}
      \in \Z[x_1, \ldots, x_n, y_1, \ldots, y_n].
\]
In particular, the hypersurfaces of Theorem \ref{thm:rank-2-3}
are affine varieties of cluster algebra type
with generalized Cartan matrices
$\begin{psmallmatrix}
 2 & -a \\
 -b & 2
\end{psmallmatrix}$
and
$\begin{psmallmatrix}
 2 & -a & 0 \\
 -b & 2 & -d \\
 0 & -c & 2
\end{psmallmatrix}$.

The main geometric result of this paper is
a ``positive Siegel theorem'' for
affine varieties of cluster algebra type $\cala$
in terms of the invariant
\[
  t_C := \min_{I \subset \{1, \ldots, n\}} C_I
    = \text{the smallest principal minor of } C,
\]
where $C_I$ is the deteminant of the
submatrix of $C$ obtained by removing the $i$-th
column and $i$-th row of $C$ for all $i \in I$.
Recall that a generalized Cartan matrix $C$ is finite type
if and only if $t_C > 0$.

\begin{theorem}
  \label{thm:positive-siegel}
  Let $n$ be a positive integer and
  let $X$ be an $n$-dimensional affine variety of cluster algebra type $\cala$
  associated to an $n \times n$ generalized Cartan matrix $C$.
  \begin{enumerate}[(a)]
    \item If $t_C \leq 0 $, then $\#X(\N) = \infty$.
    \item If $t_C > 0$, then $X(\N)$ is finite
      and $\#X(\N)$ is precisely given by
      the formulas in \nameref{sec:table-A}.
  \end{enumerate}
\end{theorem}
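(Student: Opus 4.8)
The plan is to trade the variety $X$ for the combinatorics of the cluster algebra $\cala$. Since the $i$-th equation $f_{C,i}=0$ solves for $y_i$ in terms of the $x$'s, projection to the first $n$ coordinates identifies $X(\N)$ with the set of tuples $(a_1,\dots,a_n)\in\N^n$ such that $a_i$ divides $\prod_{j<i}a_j^{-c_{j,i}}+\prod_{j>i}a_j^{-c_{j,i}}$ for every $i$. These are precisely the clusters of positive integers at which the $n$ immediate mutations of the acyclic quiver $Q$ underlying $C$ return positive integers, and one checks that such a cluster in fact propagates --- the evaluation $x_i\mapsto a_i$ then carries every cluster variable to a positive integer --- so that $X(\N)$ is in natural bijection with the set of positive integral friezes of $\cala$. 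The dichotomy to exploit is the Fomin--Zelevinsky classification together with the characterization recalled above: $\cala$ is of finite cluster type if and only if $C$ is a finite-type generalized Cartan matrix, if and only if $t_C>0$.

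For part (b), assume $t_C>0$, so $\cala$ has only finitely many cluster variables and a finite exchange graph. I would first bound the entries: a Diophantine argument propagating the exchange relations $u\,u^{*}=\mathrm{(monomial)}+\mathrm{(monomial)}$ around the finite exchange graph (already in the rank-$2$ case the two divisibilities force $\gcd(a_1,a_2)=1$, which is exactly what keeps the solution set finite) yields an explicit $N(C)$ with $\max_i a_i\le N(C)$; equivalently, one invokes the finiteness of positive integral friezes of finite type through the bijection of the first step. This gives finiteness and reduces $\#X(\N)$ to a finite verification. In classical type $A_n$, $B_n$, $C_n$, $D_n$ and exceptional type $F_4$, $G_2$ the count is the known one (Conway--Coxeter's Catalan numbers in type $A_n$, and the analogous enumerations in the other cases), and it is multiplicative over the connected components of $C$; this is what the formulas of \nameref{sec:table-A} record. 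For types $E_7$ and $E_8$ the bound $N(E_7)$, $N(E_8)$ makes the search finite, and carrying it out --- organized so that the periodicity of a frieze (its $\tau$-invariance) leaves only finitely many patterns to test --- returns exactly $4400$ and $26952$, which is the Fontaine--Plamondon conjecture.

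For part (a), assume $t_C\le 0$, so $\cala$ is of infinite cluster type. Let $\mu_c$ be a Coxeter mutation of $Q$ (mutate at the vertices in a source-compatible order); a fixed power of $\mu_c$ preserves $Q$, and because $\cala$ is of infinite type this power has infinite order on the exchange graph. Hence, starting from the unitary seed $(1,\dots,1)$, its iterates form an infinite family of pairwise distinct seeds $(a_1^{(k)},\dots,a_n^{(k)})$, all carrying the quiver $Q$; by the Laurent phenomenon and positivity of the mutation maps each entry is a positive integer --- in fact the entries grow, linearly when $C$ is affine and exponentially otherwise, which is the mechanism behind the Mordell--Schinzel-type families underlying Theorem~\ref{thm:rank-2-3} --- and each such seed is a point of $X(\N)$ under the identification above. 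Distinct seeds give distinct points, so $\#X(\N)=\infty$.

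The hard part is the exceptional count in part (b): producing a bound $N(E_7)$, $N(E_8)$ that is simultaneously correct, provably so, and small enough for the enumeration to be feasible is precisely the content the Fontaine--Plamondon conjecture was missing, and it is where the structural analysis of Dynkin friezes must do its work. Secondary technical points are the verification in the first step that immediate integrality propagates to the whole frieze, and the bookkeeping across non-simply-laced types and disconnected $C$ needed to put the table of \nameref{sec:table-A} into closed form.
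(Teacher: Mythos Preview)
Your outline matches the paper's proof: the bijection $X(\N)\leftrightarrow$ positive integral friezes (Proposition~\ref{prop:correspondence}, due to de Saint Germain--Huang--Lu), the Fomin--Zelevinsky dichotomy, Gunawan--Muller for finiteness in (b), the known enumerations outside $E_7$ and $E_8$, and a bounded search for those two. Your iterated Coxeter mutation from the unitary seed is exactly the mechanism behind the paper's one-line argument for part~(a).

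The one place where you are vaguer than the paper is the one you flag. The feasible $E_8$ bound does not come from ``propagating exchange relations around the exchange graph'' in any direct sense; it comes from a refinement of Muller's log-average bound, which controls the geometric mean of each frieze row via the positivity of $C_{E_8}^{-1}$. The refinement (Lemma~\ref{prop:E8-bound}) uses the specific $E_8$ mesh relations together with the observation that in an all-$\ge 2$ frieze certain rows are automatically $\ge 3$, cutting Muller's naive bound of order $10^{266}$ on the minimal trivalent-row entry down to about $1.7\times 10^{10}$; combined with Lemma~\ref{lem:E8-x4-largest} (all other coordinates bounded by $x_4+4$) this makes the search tractable. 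The paper then obtains $E_7$ not by a separate bound and search but by intersecting $X_{E_8}$ with the hyperplane $\{x_8=1\}$ (Proposition~\ref{prop:hyperplane}) and counting among the $26952$ already-found $E_8$ points.
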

\begin{remark}
  We give an effective version of Theorem \ref{thm:positive-siegel}(b)
  in Proposition \ref{prop:explicit-bounds},
  which specifies bounds on the heights of $X(\N)$
  that are explicit for all finite types
  and sharp for all exceptional finite types.
  We also give direct elementary proofs of the
  enumeration theorems for finite types $A_1$, $A_2$, $A_3$, and $G_2$
  in \hyperref[sec:appendix]{Appendix A}
  because even the small rank finite types are interesting
  from an arithmetic point of view: the corresponding
  affine varieties have
  infinitely many \textit{integral} points but
  only finitely many \textit{positive integral} points
  when $n > 1$.
\end{remark}

The key cluster algebra ingredients in the
proof of Theorem \ref{thm:positive-siegel} are
the classification of cluster algebras
by Fomin--Zelevinsky \cite{fomin-zelevinsky-2},
the finiteness of Dynkin type friezes by
Gunawan--Muller \cite{gunawan-muller}
and Muller \cite{muller},
and the correspondence between positive integral friezes and
positive integral points on affine varieties
by de Saint Germain--Huang--Lu \cite{dhl}.
The key arithmetic geometry ingredients of the proof
are the study of $(\Z/P_{C}\Z)$-orbits on $X$,
where $P_{C}$ is a positive integer associated
to each finite type $C$
(as listed in \nameref{sec:table-A})
and a reduction procedure of taking
intersections with certain affine hyperplanes
(which is shown to be equivalent to removing degree-$1$ nodes
from the corresponding Dynkin diagram
by Proposition \ref{prop:hyperplane}).
For each finite type,
we reduce the enumeration of $X(\N)$
to reasonable finite computational searches;
we perform this computation explicitly for the $E_8$ case
in Section \ref{sec:En}.

\subsection*{Counting positive integral friezes}
Using Theorem \ref{thm:positive-siegel}(b),
we give an application of Diophantine geometry
to combinatorics and the theory of cluster algebras.
In particular, we look at the enumeration of friezes
when $\cala$ is of finite type $\Delta_n$.

In 1971, Coxeter \cite{coxeter}
introduced frieze patterns for each positive integer $n \in \N$:
arrays of numbers $x_{i, j}$ with $n + 2$ rows and infinitely many columns of the shape
\[
    \begin{tikzcd}[row sep=.015in, column sep = .1in, font={\small}]
        \ldots & & 1 & & 1 & & 1 & & 1 & & \ldots \\
        & \ldots & & x_{1, 1} & & x_{1, 2} & & x_{1, 3} & & \ldots & & \\
        \ldots & & x_{2, 0} & & x_{2, 1} & & x_{2, 2} & & x_{2, 3} & & \ldots \\
        & \ldots & & x_{3, 0} & & x_{3, 1} & & x_{3, 2} & & \ldots & & \\
        \ldots & & x_{4, -1} & & x_{4, 0} & & x_{4, 1} & & x_{4, 2} & & \ldots \\
        & \ddots & & \ddots & & \ddots & & \ddots & & \ddots & \\
        \\
        \ldots & & x_{n, -2} & & x_{n, -1} & & x_{n, 0} & & x_{n, 1} & & \ldots & \\
        & \ldots & & 1 & & 1 & & 1 & & \ldots & & 
    \end{tikzcd}
\]
such that all diamonds
\[
    \begin{tikzcd}[row sep=.015in, column sep = .05in, font={\small}]
        & b & \\
        a & & d \\
        & c &
    \end{tikzcd}
\]
satisfy the unimodular relation $ad - bc = 1$.
These generalize Gauss's formulas for the \textit{pentagramma mirificum},
which correspond to the $n=2$ case.
Conway--Coxeter \cite{coxeter-conway-1, coxeter-conway-2}
proved that all frieze patterns are periodic and
that there are only finitely many frieze patterns with
positive integer entries for each $n \in \N$.
They furthermore constructed a bijection with
the number of triangulations of $(n+3)$-sided polygons
to show that the number of friezes is the $(n+1)$-st Catalan number
$\frac{1}{n+2}{\binom{2n+2}{n+1}}$.

Frieze patterns were generalized
to all finite Dynkin types $\Delta_n$
by Caldero--Chapoton \cite{caldero-chapoton}
and Assem--Reutenauer--Smith \cite{assem-reutenauer-smith},
who constructed Dynkin friezes
as special $\Z$-valued homomorphisms of cluster algebras
and showed that there is a finite period
$P_{\Delta_n}$ associated to each finite type.
The Conway--Coxeter theorem corresponds to the enumeration of $A_n$-friezes
and has been extended to all Dynkin types except $E_7$ and $E_8$
by \cite{coxeter-conway-2,morier-genoud-ovsienko-tabachnikov,
fontaine-plamondon,bfgst}
using triangulations of
Euclidean $n$-gons, Poisson geometry,
triangulations of once-punctured $n$-gons,
and foldings of Dynkin diagrams.

The main cluster algebra theory application of this article
is the resolution of the conjecture of
Fontaine--Plamondon \cite[Conjecture 4.5]{fontaine-plamondon},
which states that there are $4400$ and $26952$
many positive integral friezes of Dynkin type $E_7$ and $E_8$ respectively.
This completes the enumeration of all positive integral friezes of Dynkin type.
\begin{theorem}[A frieze application of Theorem {\ref{thm:positive-siegel}(b)}]
\label{thm:En}
  For each finite Dynkin type $\Delta_n$,
  the number of positive integral friezes of type $\Delta_n$
  is precisely given by the formula in \nameref{sec:table-A}.
  In particular, there are exactly $4400$ and $26952$ positive integral friezes of
  Dynkin type $E_7$ and $E_8$ respectively.
\end{theorem}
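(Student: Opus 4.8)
The plan is to deduce Theorem~\ref{thm:En} directly from Theorem~\ref{thm:positive-siegel}(b) by converting the enumeration of positive integral friezes of type $\Delta_n$ into the enumeration of $X(\N)$ for the affine variety $X$ of cluster algebra type attached to the Cartan matrix of $\Delta_n$. First I would recall the construction of Caldero--Chapoton and Assem--Reutenauer--Smith: a Dynkin frieze of type $\Delta_n$ is a $\Z$-valued homomorphism of the cluster algebra $\cala$ of type $\Delta_n$ with trivial coefficients, equivalently an array of positive integers subject to the unimodular diamond relations, and by the generalized periodicity theorem such a frieze is determined by its values on a fundamental domain of size $n$ (with period $P_{\Delta_n}$). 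The crucial input is the de Saint Germain--Huang--Lu correspondence \cite{dhl}, which identifies the set of positive integral friezes of type $\Delta_n$ with $X(\N)$, where $X \subset \A^{2n}$ is cut out by the polynomials $f_{C,1}, \dots, f_{C,n}$ for $C$ the Cartan matrix of $\Delta_n$; this is exactly an affine variety of cluster algebra type $\cala$ in the sense of Theorem~\ref{thm:positive-siegel}.

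Since $\Delta_n$ is of finite type, its Cartan matrix $C$ has all principal minors positive, so $t_C > 0$ and Theorem~\ref{thm:positive-siegel}(b) applies: $X(\N)$ is finite and $\#X(\N)$ is given by the formulas tabulated in \nameref{sec:table-A}. Combining this with the bijection of the previous paragraph, the number of positive integral friezes of type $\Delta_n$ equals $\#X(\N)$ and is therefore given by the same formula. Evaluating the formulas in the exceptional cases (using in particular the explicit $E_8$ computation of Section~\ref{sec:En}) produces $4400$ for $E_7$ and $26952$ for $E_8$, which is precisely the Fontaine--Plamondon conjecture.

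The only genuine work is the compatibility check between conventions: one must confirm that a ``positive integral frieze'' in the sense of Fontaine--Plamondon---all entries in $\Z_{>0}$, bordered by rows of $1$'s---corresponds under \cite{dhl} to a point of $X$ with all $2n$ coordinates in $\N = \Z_{>0}$, not merely in $\Z_{\geq 0}$, and that the choice of initial seed used to define the $f_{C,i}$ does not change the count, which follows from the mutation-equivariance of the frieze/point correspondence. I expect this bookkeeping, rather than any new mathematical difficulty, to be the main obstacle; the finiteness half is in any case already guaranteed by Gunawan--Muller \cite{gunawan-muller} and Muller \cite{muller}, and the exact counts then fall out of Theorem~\ref{thm:positive-siegel}(b).
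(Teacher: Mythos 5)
Your proposal is correct and follows essentially the same route as the paper: the paper likewise deduces Theorem~\ref{thm:En} by combining the enumerative part of Theorem~\ref{thm:positive-siegel}(b) with the de Saint Germain--Huang--Lu bijection of Proposition~\ref{prop:correspondence}. The convention-compatibility check you flag is exactly the content of that proposition, so no further work is needed.
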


\begin{remark}
  Theorem \ref{thm:En} follows from the enumerative part of
  Theorem \ref{thm:positive-siegel}(b)
  and the bijection between $X(\N)$
  and positive integral friezes
  by de Saint Germain--Huang--Lu \cite{dhl}
  (Proposition \ref{prop:correspondence}).
  This also gives new Diophantine proofs of enumeration theorems in
  \cite{coxeter-conway-2,morier-genoud-ovsienko-tabachnikov,
  fontaine-plamondon,bfgst}.
\end{remark}

The study of friezes over $\Z$, $\Z[\sqrt{d}]$, and $\Z/N\Z$
has been fruitfully initiated in
\cite{fontaine-2014,cuntz-holm,cuntz-holm-jorgensen,gunawan-schiffler,
holm-jorgensen,morier-genoud-2021, bftwz, cuntz-holm-pagano,short-vanson-zabolotskii}.
One may generalize our Diophantine approach
by studying $X_{C}(R)$ for these
and other integral domains $R$, both with or without
positivity conditions.
In another direction,
friezes with general $\SL(n)$ unimodular conditions
were introduced by
Cordes-Roselle \cite{cordes-roselle};
these were shown by
Baur--Faber--Gratz--Serhiyenko--Todorov \cite{bfgst}
to arise as specializations of cluster algebras
to the positive integers just like the classical
$\SL(2)$-friezes;
so one might also use the corresponding variety to study
general $\SL(n)$-friezes.

\subsection{Acknowledgements}

The author is deeply thankful to Antoine de Saint Germain
for kindly introducing the author to friezes
and the contents of his thesis during the
2022 AMSI--MSRI Winter--Summer School on Representation Theory
at the University of Hawai{\okina}i at Hilo
as well as for answering many na\"{i}ve follow-up questions.
The author is grateful to Noam Elkies for fruitful exchanges
on the enumerations for small $n$ and
on the initial numerical computations for $E_n$;
to Jennifer Balakrishnan, Jane Shi,
Drew Sutherland, and John Voight for insightful discussions
about computational algorithms;
to Pietro Corvaja and Umberto Zannier
for helpful conversations about integral values of
rational functions and the work of Koll\'{a}r--Li;
and to Min Huang and Greg Muller
for helpful comments about this paper.

The author is supported by the National Science Foundation
under Grant No. DMS-2303280.
The author acknowledges the MIT Office of Research Computing and Data
for providing performance computing resources that have contributed to
the research results reported in this article.

%%%%%%%%%%%%%%%%%%%%%%%%%%%%%%%%%%%%%%%%%%%%%%%%%%%%%

\numberwithin{equation}{section}
\numberwithin{theorem}{section}

\section{Affine varieties of cluster algebra type}
\label{sec:frieze-points}

\subsection{Positive integral friezes}
\label{subsec:friezes}

By the work of Fomin--Zelevinsky \cite{fomin-zelevinsky-2},
there is a classification of cluster algebras into types
that mirrors the Killing--Cartan classification of simple Lie groups
and the classification of Dynkin diagrams.
Throughout this paper, we will
restrict ourselves to cluster algebras $\cala = \cala_C$
(with trivial coefficients and exchange matrix $B = 2 \cdot \Id_{n \times n} - C$)
associated to an $n \times n$ generalized Cartan matrix $C$.
When $\cala$ is of finite type,
there is a Dynkin type $\Delta_n$ of rank $n$ 
such that $C$ is equivalent to the generalized Cartan matrix
$C_{\Delta_n}$ of $\Delta_n$ up to a change of basis
(see \nameref{sec:table-A} for the list of finite Dynkin types
$\Delta_n$ and a choice of $C_{\Delta_n}$);
we will simply use $\Delta_n$ to refer to the type of $\cala$
in this case.

Following Caldero--Chapoton \cite{caldero-chapoton}
and Assem--Reutenauer--Smith \cite{assem-reutenauer-smith},
a positive integral frieze
is a ring homomorphism from
a cluster algebra $\cala$ to $\Z$
such that the cluster variables are sent to
the set $\N$ of positive integers.
Positive integral friezes were used in the
work of Assem--Reutenauer--Smith
to recover results of
\cite{fomin-zelevinsky-1, fomin-zelevinsky-2, fomin-zelevinsky-3, fomin-zelevinsky-4}
and to prove new explicit formulas for cluster variables.
In this generalization,
the classical Coxeter friezes
are the positive integral friezes for cluster algebras
of type $A_n$.

By the work of Fomin--Zelevinsky \cite{fomin-zelevinsky-2}
(cf. \cite[Corollary 1]{assem-reutenauer-smith},
\cite[Theorem 2.7]{morier-genoud}),
if $C$ is of finite type $\Delta_n$
then each $C$-frieze $F$ (which we also call a $\Delta_n$-frieze)
is periodic under horizontal translation with some period $P_{\Delta_n}$.
In particular, each $\Delta_n$-frieze can be uniquely represented by
an $n \times P_{\Delta_n}$ grid of positive integers
\[
  \begin{tikzcd}[row sep=.015in, column sep = .1in]
    F_{1,1} & F_{1,2} & \cdots & F_{1,P_{\Delta_n}} \\
    F_{2,1} & F_{2,2} & \cdots & F_{2,P_{\Delta_n}} \\
    \vdots & \vdots & \ddots & \vdots \\
    F_{n,1} & F_{n,2} & \cdots  & F_{n,P_{\Delta_n}}
  \end{tikzcd}
\]
such that it satisfies the mesh relations for all $(i, j)$
in terms of the Cartan matrix $C = C_{\Delta_n} = (c_{i, j})$:
\[
    F_{i, j} F_{i, j + 1}
        = 1 + \prod_{k = 1}^{i - 1} F_{k, j + 1}^{-c_{k, i}}
          \prod_{k = i + 1}^n F_{k, j}^{-c_{k, i}}.
\]

\begin{example}[A frieze of type $E_8$]
    \label{ex:E8}
    \[
      \begin{tikzcd}[row sep=.025in, column sep = .1in, font = {\small}]
        \ldots & 1 & & 1 & & 1 & & 1 & & 1 & & 1 & \ldots \\
        \ldots & & 4 & & 3 & & 3 & & 4 & & 4 & & \ldots \\
        \ldots & 15 & & 11 & & 8 & & 11 & & 15 & & 11 & \ldots \\
        \ldots & 7 & 41 & 6 & 29 & 5 & 29 & 6 & 41 & 7 & 41 & 6 & \ldots \\
        \ldots & 16 & & 18 & & 21 & & 18 & & 16 & & 18 & \ldots \\
        \ldots & & 7 & & 13 & & 13 & & 7 & & 7 & & \ldots \\
        \ldots & 3 & & 5 & & 8 & & 5 & & 3 & & 5 & & \\
        \ldots & & 2 & & 3 & & 3 & & 2 & & 2 & & \ldots \\
        \ldots & 1 & & 1 & & 1 & & 1 & & 1 & & 1 & \ldots \\
      \end{tikzcd}
    \]
    is a frieze of type $E_8$. In particular, all diamonds
    \[
      \begin{tikzcd}[row sep=.015in, column sep = .05in, font = {\small}]
        & b & \\
        a & & d \\
        & c & \\
      \end{tikzcd}
      \qquad \text{ and } \qquad
      \begin{tikzcd}[row sep=.015in, column sep = .05in, font = {\small}]
        & f & \\
        e & g & i \\
        & h & \\
      \end{tikzcd}
    \]
    satisfy the unimodular relations $ad - bc = 1$
    and $ei - fgh = 1$.
    This $E_8$-frieze has exact period $4$
    and can be represented as the $8 \times 4$ grid:
    \[
      \begin{tikzcd}[row sep=.01in, column sep = .1in]
        6 & 5 & 6 & 7 \\
        4 & 3 & 3 & 4 \\
        11 & 8 & 11 & 15 \\
        29 & 29 & 41 & 41 \\
        21 & 18 & 16 & 18 \\
        13 & 7 & 7 & 13 \\
        5 & 3 & 5 & 8 \\
        2 & 2 & 3 & 3
      \end{tikzcd}
    \]
    Notice that the first entry in each column of the grid
    corresponds to a subdiagonal of the frieze
    and the rest of each column
    corresponds to the adjacent diagonal of the frieze.
    A different choice of Cartan matrix for $E_8$
    would give rise to a different presentation
    of the frieze. Our choice of Cartan matrix
    given in \nameref{sec:table-A} corresponds
    to the following labeling of the Dynkin diagram for $E_8$:
    \[
      \begin{tikzcd}
          & & \overset{1}{\bullet} \arrow[d, dash] \\
          \underset{2}{\bullet} \arrow[r, dash] & \underset{3}{\bullet} \arrow[r, dash] & \underset{4}{\bullet} \arrow[r, dash] & \underset{5}{\bullet} \arrow[r, dash] & \underset{6}{\bullet} \arrow[r, dash] & \underset{7}{\bullet} \arrow[r, dash] & \underset{8}{\bullet}
      \end{tikzcd}
    \]
\end{example}

\subsection{Frieze polynomials and affine varieties}
\label{subsec:frieze-polynomials}
Let $C$ be an $n \times n$ generalized Cartan matrix with entries $c_{i, j}$.
Consider the complex affine space $\A_\C^{2n}$
with coordinates $x = (x_1, \ldots, x_n; y_1, \ldots y_n)$.
The following definition is based on the
lower bound model of
Berenstein--Fomin--Zelevinsky \cite[Section 1.3]{fomin-zelevinsky-3}
and the model of Geiss--Leclerc--Schr\"{o}er
\cite[Proposition 7.4]{geiss-leclerc-schroer}
for the cluster algebra $\cala_C$ with trivial coefficients
associated to $C$.
\begin{definition}
    \label{def:frieze-polynomial}
    For each integer $i \in \{1, \ldots, n\}$,
    the $i$-th \textit{lower bound $C$-frieze polynomial} is
    \[
      f_{C, i}
        := x_i y_{i}
          - \prod_{j = 1}^{i - 1} x_j^{-c_{j, i}}
          - \prod_{j = i + 1}^n x_j^{-c_{j, i}}
          \in \Z[x_1, \ldots, x_n, y_1, \ldots, y_n],
    \]
    and the $i$-th \textit{Geiss--Leclerc--Schr\"{o}er $C$-frieze polynomial} is
    \[
      g_{C, i}
        := x_i z_{i}
          - \prod_{j = i + 1}^n x_j^{-c_{j, i}}
          \prod_{j = 1}^{i - 1} z_j^{-c_{j, i}} - 1
          \in \Z[x_1, \ldots, x_n, z_1, \ldots, z_n].
    \]
    For the coordinate rings
    $R_{C} := \Z[x_1, \ldots, x_n, y_1, \ldots, y_n]
    / (f_{C, 1}, \ldots, f_{C, n})$,
    and
    $S_{C} := \Z[x_1, \ldots, x_n, z_1, \ldots, z_n]
    / (g_{C, 1}, \ldots, g_{C, n})$
    define the affine varieties $X_{C} := \spec(R_{C})$
    and $Y_{C} := \spec(S_{C})$.
\end{definition}
\begin{definition}
  \label{def:cluster-algebra-type}
  We say that an affine variety $X$ is of \textit{cluster algebra type} if
  $X$ is isomorphic to $\spec(\cala_C)$, $X_C$, or $Y_C$
  for some generalized Cartan matrix $C$.
\end{definition}

The $n$-dimensional affine variety $X_{C}$ can be
given as the zero locus of the lower bound $C$-frieze polynomials:
\[
    X_{C}
      := V\big(\set{f_ {C, i}}_{1 \leq i \leq n}\big)
      = \set{x \in \A_\C^{2n} \Mid
        f_{C, 1}(x) = \ldots = f_{C, n}(x) = 0} \subset \A_\C^{2n};
\]
similarly $Y_C$ is the zero locus of the ideal generated by the $g_{C, i}$.
The affine variety $X_C$ is smooth at its positive integral points
but can have singularities;
see \cite{bfms}, which exhibits singularities of
$\spec(\cala_{C})$
for the cluster algebra $\cala_{C}$ of type $\Delta_n$.
Note that the variety $X_C$ is different than the frieze variety $X(Q)$
associated to an acyclic quiver $Q$
as defined by Lee--Li--Mills--Schiffler--Seceleanu \cite{llmss}.
Also note that the set of $\Delta_n$-frieze polynomials is
only unique up to the choice of Cartan matrix for $\Delta_n$,
but the number of positive integral points does not depend on
the choice of Cartan matrix $C$
by Proposition \ref{prop:correspondence}.
See \nameref{sec:table-A} for
a list of frieze polynomials of each Dynkin type.

A key ingredient in our proofs is the following
correspondence between friezes
and affine varieties due to
de Saint Germain--Huang--Lu \cite[Section 6.3]{dhl}
(see \cite[Example 3.4.2]{dsg-thesis} for an illustration in
type $E_7$).
\begin{proposition}[{\cite[Section 6.3]{dhl}}]
  \label{prop:correspondence}
  Let $C$ be a generalized $n \times n$ Cartan matrix.
  \begin{enumerate}[(a)]
    \item There is a one-to-one correspondence between
      the set of positive integral $C$-friezes
      and $X_{C}(\N)$.
    \item There is a one-to-one correspondence between
      the set of positive integral $C$-friezes
      and $Y_{C}(\N)$.
  \end{enumerate}
\end{proposition}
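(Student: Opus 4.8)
The plan is to make explicit the dictionary between the entries of a periodic $C$-frieze and the coordinates on $X_C$ (respectively $Y_C$), and then check that the mesh relations on the frieze are precisely the vanishing of the lower bound frieze polynomials $f_{C,i}$ (respectively the Geiss--Leclerc--Schröer polynomials $g_{C,i}$). First I would fix the normalization: a $\Delta_n$-frieze is recorded as an $n \times P_{\Delta_n}$ grid $(F_{i,j})$ subject to the mesh relations
\[
  F_{i,j} F_{i,j+1}
    = 1 + \prod_{k=1}^{i-1} F_{k,j+1}^{-c_{k,i}}
          \prod_{k=i+1}^n F_{k,j}^{-c_{k,i}},
\]
with column indices read modulo $P_{\Delta_n}$. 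For part (a), the claim is that the assignment sending a frieze $F$ to the point whose $x_i$-coordinate is $F_{i,1}$ and whose $y_i$-coordinate is $F_{i,2}$ — equivalently, reading off one ``fundamental'' pair of adjacent columns — lands in $X_C(\N)$, because $f_{C,i}(x,y) = 0$ is exactly the $j=1$ mesh relation once one absorbs the ``$+1$'': the polynomial $f_{C,i} = x_i y_i - \prod_{j<i} x_j^{-c_{j,i}} - \prod_{j>i} x_j^{-c_{j,i}}$ matches $F_{i,1}F_{i,2} - (\text{stuff}) - (\text{stuff})$ after a shift by $1$ built into the $g$'s versus a different bookkeeping for the $f$'s; I will need to be careful about which of the two products uses column $j$ versus column $j+1$ and reconcile this with the one-step propagation. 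The nontrivial direction is the inverse map: given a positive integral point of $X_C$, one must show the single mesh relation $f_{C,i}=0$ forces all the others, i.e. that the frieze is determined and that propagation by the mesh rule indeed returns positive integers and is periodic with the expected period. This is exactly where I would invoke the cited structural results: periodicity of $\Delta_n$-friezes (Fomin--Zelevinsky, via \cite{assem-reutenauer-smith,morier-genoud}) guarantees the grid is finite, and the cluster-algebra interpretation of the $f_{C,i}$ as the lower-bound relations (Berenstein--Fomin--Zelevinsky \cite{fomin-zelevinsky-3}) together with the positivity/Laurent phenomenon ensures that a positive solution of the defining equations propagates to a genuine positive frieze rather than merely a rational one.

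For part (b), I would run the same argument with $Y_C = \spec(S_C)$, where now the relation $g_{C,i} = x_i z_i - \prod_{j>i} x_j^{-c_{j,i}} \prod_{j<i} z_j^{-c_{j,i}} - 1 = 0$ already carries the ``$+1$'' explicitly and the two frame-products are written with a uniform convention ($x_j$ for $j>i$, $z_j$ for $j<i$). Here the Geiss--Leclerc--Schröer model \cite[Proposition 7.4]{geiss-leclerc-schroer} identifies $S_C$ with the cluster algebra $\cala_C$ (with trivial coefficients), so $Y_C(\N)$ is literally the set of $\Z$-algebra homomorphisms $\cala_C \to \Z$ with cluster variables landing in $\N$ — which is the definition of a positive integral $C$-frieze from \S\ref{subsec:friezes}. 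Thus (b) is almost a tautology once the GLS presentation is in hand, and the content is merely to record the correspondence of coordinates $(x_i, z_i)$ with two adjacent columns of the frieze grid. Combining (a) and (b) also reproves that $\#X_C(\N) = \#Y_C(\N)$, consistent with the independence-of-Cartan-matrix remark; I would flag that the bijection in (a) is compatible with the one in (b) via the birational map $X_C \dashrightarrow Y_C$ implicit in comparing the $f$- and $g$-presentations, restricting to an honest bijection on positive integral points.

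The main obstacle I anticipate is not the forward direction (reading a frieze as a point is a bookkeeping match of monomials) but the inverse direction for part (a): showing that a positive integral solution of the finite system $f_{C,1} = \cdots = f_{C,n} = 0$ extends uniquely to a full bi-infinite frieze with positive integer entries. The subtlety is that the mesh recursion $F_{i,j+1} = \big(1 + \prod \cdots\big)/F_{i,j}$ a priori only produces rational numbers, and one needs the Laurent phenomenon together with positivity (or, concretely, the fact proved in the cited cluster-algebra literature that the lower bound coincides with the upper bound / cluster algebra in these acyclic cases) to guarantee integrality and positivity propagate from a single window. I would handle this by citing \cite[Section 6.3]{dhl} for the precise statement and sketch the reduction: the $n$ equations $f_{C,i}=0$ are exactly one ``mutation step'' worth of mesh relations, the acyclicity of $B = 2\Id - C$ makes the propagation well-defined in both directions, and periodicity (finite type) closes up the grid, so the data of one fundamental window is equivalent to the whole frieze.
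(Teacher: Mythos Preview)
The paper does not actually prove this proposition: it is stated with attribution to \cite[Section 6.3]{dhl} and used as a black box. So there is no ``paper's own proof'' to compare against; what follows is an assessment of your sketch on its merits.

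Your treatment of part (b) is essentially correct: with $x_i = F_{i,1}$ and $z_i = F_{i,2}$, the relation $g_{C,i}=0$ is literally the mesh relation at $j=1$, and the GLS presentation identifies $Y_C(\N)$ with ring homomorphisms $\cala_C \to \Z$ sending cluster variables to $\N$, which is the definition of a positive integral frieze. The inverse direction does require the Laurent/positivity input you flag, and citing \cite{dhl} or the acyclic lower-bound $=$ cluster-algebra result from \cite{fomin-zelevinsky-3} is the right move.

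Part (a), however, has a real gap. The identification $y_i = F_{i,2}$ is wrong, and the vanishing $f_{C,i}=0$ is \emph{not} a mesh relation after any bookkeeping shift. The polynomial $f_{C,i}$ has the form $x_i y_i = \prod_{j<i} x_j^{-c_{j,i}} + \prod_{j>i} x_j^{-c_{j,i}}$, a \emph{sum of two monomials in the $x$'s alone}; the mesh relation has the form $F_{i,1}F_{i,2} = 1 + (\text{single monomial mixing columns }1\text{ and }2)$. Already for $A_2$ your recipe fails: with $x_i=F_{i,1}$, $y_i=F_{i,2}$ the mesh gives $x_2 y_2 = 1 + y_1$, whereas $f_{C,2}=0$ demands $x_2 y_2 = 1 + x_1$. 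The correct interpretation is that $y_i$ is the one-step mutation $x_i'$ of $x_i$ in the \emph{initial} seed (this is the Berenstein--Fomin--Zelevinsky lower-bound model), and in the frieze grid these mutated variables sit at $n$ scattered positions, not in a single adjacent column; the paper says exactly this just after the proposition (``$n$ other entries given by the $y_i$''). So for (a) you should replace the ``two adjacent columns'' map by the map sending a frieze, viewed as a homomorphism $\cala_C\to\Z$, to its values on the $2n$ lower-bound generators $x_1,\dots,x_n,x_1',\dots,x_n'$; then $f_{C,i}=0$ is the exchange relation, and the inverse direction again reduces to lower bound $=$ cluster algebra in the acyclic case plus positivity.
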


In the finite type case,
a consequence of Proposition \ref{prop:correspondence}
is that the positive integrality of the first diagonal
and $n$ other entries of a Dynkin frieze $F$
are sufficient to both
uniquely determine all other entries
and guarantee their positive integrality.
A point $x \in X_{\Delta_n}(\N)$
corresponds to the $\Delta_n$-frieze with the first column
given by the $x_i = F_{i, 1}$
and with $n$ other entries given by the $y_i$.

Since friezes of Dynkin type $\Delta_n$
are horizontally periodic with period $P_{\Delta_n}$,
there is an action of $\Z/P_{\Delta_n}\Z$ on
$X_{\Delta_n}(\N)$. For $x \in X_{\Delta_n}(\N)$,
let $F$ denote the corresponding $\Delta_n$-frieze.
For each $\sigma_m \in \Z/P_{\Delta_n}\Z$,
define $\sigma_m(x)$ to be the point of $X_{\Delta_n}(\N)$
corresponding to the horizontal translation of
$F$ to the right by $m$ columns.
de Saint Germain has kindly pointed out
to the author that the $\sigma_1$
action has the following names in different contexts:
\begin{itemize}
    \item the Auslander--Reiten translate
        in finite-dimensional representation theory;
    \item the Fomin--Zelevinsky twist in Lie theory
        (cf. \cite[\S5.6]{dsg-thesis};
    \item the Donaldson--Thomas transformation
        in Calabi--Yau theory;
    \item the maximal green sequence in combinatorics.
\end{itemize}
We note that $\sigma_1$ acts on $Y_C(\N)$
by sending a positive integral point
$(x_1, \ldots, x_n, z_1, \ldots, z_n)$
to another positive integral point
$(z_1, \ldots, z_n, z_1', \ldots, z_n')$.

In this article, we will generally focus
on the lower bound frieze polynomials
and the affine variety $X_C$
due to the lower polynomial degrees.
For the remainder of this article,
``frieze polynomial'' will generally
refer to the lower bound frieze polynomial.

\begin{remark}
  An immediate corollary of the theorem of Conway--Coxeter \cite{coxeter-conway-2}
  and Proposition \ref{prop:correspondence}
  is an expression for the $n$-th Catalan number
  as the number of positive integer solutions
  to a Diophantine system of equations:
  \[
    \frac{1}{n+1}{\binom{2n}{n}}
      = X_{A_{n-1}}(\N)
      = \# \set{x \in \N^{2n-2} \Mid
        \subalign{
        x_1 y_{1} &= x_2 + 1, \\
        x_2 x_{2} &= x_1 + x_3, \\
        x_3 x_{3} &= x_2 + x_4, \\
        & \vdots \\
        x_{n - 2} y_{n - 2} &= x_{n - 3} + x_{n - 1} \\
        x_{n - 1} y_{n - 1} &= x_{n - 2} + 1
        }}.
  \] 
\end{remark}

\subsection{Reduction to smaller types}
\label{sec:reduction}
We observe that for our affine varieties
defined in Section \ref{subsec:frieze-polynomials},
removing a node from the Dynkin diagram of type $\Delta_n$
corresponds to taking an intersection of $X_{\Delta_n}$
with the affine hyperplane to the removed node.
For notational convenience, we will fix a Cartan matrix
$C_{\Delta_n} = (c_{i, j})$ and label the nodes
from $1$ to $n$
of the Dynkin diagram accordingly.

\begin{proposition}
    \label{prop:hyperplane}
    Let $\Delta_n$ be a Dynkin type of rank $n$
    and let $\Delta_n^{(k)}$ be a Dynkin type
    obtained by removing the $k$-th node
    from the Dynkin diagram of $\Delta_n$.
    If the degree of the $k$-th node in $\Delta_n$ is $1$,
    then $X_{\Delta_n^{(k)}} \cong X_{\Delta_n} \cap \{y_k = 1\}
    \cong X_{\Delta_n} \cap \{x_k = 1\}$.
\end{proposition}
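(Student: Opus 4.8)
The plan is to verify the two isomorphisms directly on the defining equations, using that a degree-$1$ node is a leaf. Fix $C = C_{\Delta_n} = (c_{i,j})$, write $m$ for the unique neighbor of the $k$-th node, and let $C' = C_{\Delta_n^{(k)}}$. Then $c_{j,k} = c_{k,j} = 0$ for all $j \notin \{k,m\}$ while $c_{k,m}, c_{m,k} \leq -1$, and $C'$ is the principal submatrix of $C$ obtained by deleting row and column $k$; thus $c_{i,j}$ is unchanged for $i,j \neq k$, up to the order-preserving relabeling $\{1,\dots,n\} \setminus \{k\} \cong \{1,\dots,n-1\}$, which keeps the products in the frieze polynomials intact. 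In this setup the leaf polynomial is $f_{C,k} = x_k y_k - x_m^{-c_{m,k}} - 1$, since exactly one of the two products $\prod_{j<k} x_j^{-c_{j,k}}$, $\prod_{j>k} x_j^{-c_{j,k}}$ equals $x_m^{-c_{m,k}}$ and the other equals $1$; moreover $x_k$ occurs in $f_{C,i}$ only for $i \in \{k,m\}$, and $y_k$ occurs only in $f_{C,k}$.

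First I would treat $\{x_k = 1\}$. Setting $x_k = 1$ turns $f_{C,k}$ into $y_k - x_m^{-c_{m,k}} - 1$, linear in the otherwise-unused variable $y_k$, so $y_k$ can be eliminated from the coordinate ring; setting $x_k = 1$ in $f_{C,m}$ simply deletes the monomial factor $x_k^{-c_{k,m}}$ (a genuine factor as $-c_{k,m} \geq 1$) from whichever of its two monomials contained it, which is exactly the passage from $f_{C,m}$ to $f_{C',m}$; and every other $f_{C,i}$ is untouched because it does not involve $x_k$. So after eliminating $y_k$ the equations cutting out $X_{\Delta_n} \cap \{x_k = 1\}$ become precisely those of $X_{\Delta_n^{(k)}}$, which gives that isomorphism.

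The hyperplane $\{y_k = 1\}$ needs one extra idea, and this is where I expect the only real friction. Now $f_{C,k}$ becomes $x_k - x_m^{-c_{m,k}} - 1$, so $x_k$ is eliminated via the substitution $x_k \mapsto x_m^{-c_{m,k}} + 1$; but feeding this into $f_{C,m}$ replaces the monomial factor $x_k^{-c_{k,m}}$ by $(x_m^{-c_{m,k}} + 1)^{-c_{k,m}}$, which is no longer a monomial. The key observation is that, since $-c_{m,k} \geq 1$, the binomial expansion gives $(x_m^{-c_{m,k}} + 1)^{-c_{k,m}} = 1 + x_m R$ for some $R \in \Z[x_m]$. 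Writing (without loss of generality, the case $k < m$ being symmetric) $f_{C,m} = x_m y_m - P_1 - x_k^{-c_{k,m}} Q$ with $P_1, Q$ monomials free of $x_k$, the substitution yields $x_m y_m - P_1 - Q - x_m R Q = x_m(y_m - RQ) - P_1 - Q$. Applying the automorphism of $\Z[\{x_j\}_{j \neq k}, \{y_j\}_{j \neq k}]$ that sends $y_m \mapsto y_m + RQ$ and fixes every other generator carries this polynomial to $x_m y_m - P_1 - Q = f_{C',m}$ while fixing each remaining $f_{C,i}$ (none of which involves $y_m$), so the equations of $X_{\Delta_n} \cap \{y_k = 1\}$ again match those of $X_{\Delta_n^{(k)}}$.

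Apart from the congruence $(x_m^{-c_{m,k}} + 1)^{-c_{k,m}} \equiv 1 \pmod{x_m}$, everything is routine bookkeeping: tracking the empty products at the leaf, checking that $x_k$ and $y_k$ are eliminable because they occur linearly and, outside $f_{C,k}$, at most in $f_{C,m}$, and absorbing the relabeling of the surviving nodes. One could alternatively argue frieze-theoretically via Proposition \ref{prop:correspondence} and the mesh relations, but the coordinate-ring computation above seems the most direct route.
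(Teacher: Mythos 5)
Your proof is correct and follows the same basic strategy as the paper's: work directly with the coordinate rings, use the leaf condition to isolate the two polynomials involving $x_k,y_k$, eliminate the linear variable, and repair the neighbor's equation by an affine change of the $y$-coordinate. However, your write-up is strictly more general in two respects. The paper's proof assumes $c_{k,k'}=c_{k',k}=-1$ and that $x_k$ appears as a \emph{standalone} monomial in $f_{\Delta_n,k'}$ (so that the substitution $x_k=x_{k'}+1$ produces the term $x_{k'}(y_{k'}-1)$ and the fix is the unit shift $y_{k'}\mapsto y_{k'}+1$); this covers the reductions actually used, but not, e.g., a leaf attached by a multiple edge, nor a leaf like node $1$ of $E_8$ whose variable sits inside the product $x_1x_3$ in $f_{E_8,4}$. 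Your congruence $(x_m^{-c_{m,k}}+1)^{-c_{k,m}}\equiv 1\pmod{x_m}$ together with the shear $y_m\mapsto y_m+RQ$ is exactly the right generalization of that unit shift, and your observation that no other $f_{C,i}$ involves $y_m$ makes the shear harmless. The only thing the paper's version records that yours leaves implicit is the bijection on \emph{positive} integral points (the paper notes $y_{k'}\geq 2$ is forced); in your setup this follows immediately from $x_m(y_m-RQ)=P_1+Q>0$, so nothing is missing for the statement as given.
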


\begin{proof}
    Since the $k$-th node has degree $1$,
    it is connected to a unique node $k'$.
    The $k$-th row and column of
    $C_{\Delta_n}$ are given by:
    \[
        c_{i, k} = c_{k, i} =
            \begin{cases}
                2 & \text{if } i = k, \\
                -1 & \text{if $i = k'$}, \\
                0 & \text{otherwise.}
            \end{cases}
    \]
    This degree-$1$ condition therefore implies that
    the only two frieze polynomials involving $x_k$
    and $y_k$ are:
    \begin{align*}
        f_{\Delta_n, k} &= x_k y_k - x_{k'} - 1, \\
        f_{\Delta_n, k'} &=
            \begin{cases}
                x_{k'} y_{k'} - x_{k} - x_{k''} x_{k'''} & \text{if the $k'$-th node has in-degree $3$}, \\
                x_{k'} y_{k'} - x_{k} - x_{k''} & \text{otherwise,}
            \end{cases}
    \end{align*}
    where $k''$ is possibly equal to $k'''$
    (such as when $\Delta_n = F_4$).

    The Cartan matrix $C_{\Delta_n^{(k)}}$
    is the submatrix of $C_{\Delta_n}$
    obtained by deleting the $k$-th
    row and the $k$-th column.
    Therefore, the frieze polynomials
    of $\Delta_n^{(k)}$ are,
    up to a relabeling $f: \set{1, \ldots, n} \setminus {k}
    \lra \set{1, \ldots, n - 1}$ of the nodes,
    identical to the frieze polynomials of $\Delta_n$
    that do not involve $x_k$ and $y_k$.
    Denote $X_{\Delta_n} = \spec(R_{\Delta_n})$
    and $X_{\Delta_n^{(k)}} = \spec(R_{\Delta_n^{(k)}})$,
    where $R_{\Delta_n}$ and $R_{\Delta_n^{(k)}}$
    are the coordinate rings defined by the frieze polynomials.
    The closed subscheme $Y = V(y_k - 1) \subset X_{\Delta_n}$
    defined by $y_k = 1$ is obtained by taking the
    quotient of $R_{\Delta_n}$ by the ideal $(y_k - 1)$.
    Similarly, let $Z = V(x_k - 1) \subset X_{\Delta_n}$
    be the closed subscheme obtained by taking the quotient
    of $R_{\Delta_n}$ by the ideal $(x_k - 1)$.

   In the quotient ring $R_{\Delta_n}/(y_k - 1)$,
   $f_{\Delta_n, k} = x_k - x_{k'} - 1$;
   substituting $x_k = x_{k'} + 1$ into the other polynomial
   involving $x_k$ and $y_k$ yields
   \[
        \restr{f_{\Delta_n, k'}}{x_k = x_{k'} + 1, y_k = 1} =
            \begin{cases}
                x_{k'} (y_{k'} - 1) - 1 - x_{k''} x_{k'''} & \text{if the $k'$-th node has in-degree $3$}, \\
                x_{k'} (y_{k'} - 1) - 1 - x_{k''} & \text{otherwise.}
            \end{cases}
    \]
    Since $-1 - x_{k''} x_{k'''} \neq 0$ and
    $-1 - x_{k''} \neq 0$ for positive integer $x_{k''}$
    and $x_{k'''}$, a solution to
    $\restr{f_{\Delta_n, k'}}{x_k = x_{k'} + 1} = 0$
    requires that $y_{k'} \geq 2$.
    But taking a change of variables $y_{k'} - 1 = y_{f(k')}$
    yields the frieze polynomial $f_{\Delta_n^{(k)}, f(k')}$.
    Hence the corresponding morphism of coordinate rings
    \[
        \begin{tikzcd}[row sep = 0.1em]
            & & R_{\Delta_n} / (y_k - 1) \arrow[r, "\phi^*"]
                & R_{\Delta_n^{(k)}} & \\
            & & x_i \arrow[r, mapsto]
                & x_{f(i)} & \textrm{ for } i \neq k \\
            & & y_i \arrow[r, mapsto]
                & y_{f(i)} & \textrm{ for } i \neq k, k' \\
            & & x_{k} \arrow[r, mapsto]
                & x_{f(k')} + 1. & \\
            & & y_{k} \arrow[r, mapsto]
                & 1. & \\
            & & y_{k'} \arrow[r, mapsto]
                & y_{f(k')} + 1. &
        \end{tikzcd}
    \]
    gives a morphism of schemes $\phi: Y \lra X_{\Delta_n^{(k)}}$
    with an explicit bijection of positive integral points.
    The case with $Z = V(x_k - 1) \subset X_{\Delta_n}$
    is nearly identical.
\end{proof}

Observe that for every fixed alphabetic family of Dynkin diagrams,
$\Delta_{n + 1}$ is obtained from the
Dynkin diagram for $\Delta_{n}$ by connecting a single new node
to the first node of $\Delta_n$; this
corresponds to adding a row and column with a $2$ and a $-1$
to the top and left of the Cartan matrix
(in the orientation given in the \nameref{sec:table-A}).
Then for any Dynkin type $\Delta_n$,
the specialization of the frieze equations for $\Delta_{n + 1}$
to $x_1 = 1$ and $y_2 = x_2 + 1$ is equivalent to the
frieze equations for $\Delta_n$. In other words,
$X_{\Delta_{n}} \cong X_{\Delta_{n+1}} \cap \set{x_1 = 1}$.
Conversely, one can go from $X_{\Delta_n}$ to $X_{\Delta_{n+1}}$
by replacing an equation of the form
\[
  x_i y_i = x_{j} + 1
\]
with two equations
\begin{align*}
  x_i y_i = x_{j} + x_{k}, \\
  x_k y_k = x_{i} + 1.
\end{align*}

\begin{example}
    Let $\Delta_n = E_7$. We can
    obtain the Dynkin diagram for $E_6$
    by removing the node labeled ``$7$''
    from the Dynkin diagram of $E_7$:
    \[
        \begin{tikzcd}
            & & \overset{1}{\bullet} \arrow[d, dash] \\
            \underset{2}{\bullet} \arrow[r, dash] & \underset{3}{\bullet} \arrow[r, dash] & \underset{4}{\bullet} \arrow[r, dash] & \underset{5}{\bullet} \arrow[r, dash] & \underset{6}{\bullet} \arrow[r, dash] & \tikz{\node[cross out, draw=red, line width=1pt, inner sep=0pt, minimum size=12pt] {\(\underset{7}{\bullet}\)};}
        \end{tikzcd}
    \]
    The $E_7$-frieze equations
    for the Cartan matrix in \nameref{sec:table-A} are:
    \begin{align*}
        x_1 y_1 &= x_4 + 1, \\
        x_2 y_2 &= x_3 + 1, \\
        x_3 y_3 &= x_2 + x_4, \\
        x_4 y_4 &= x_1 x_3 + x_5, \\
        x_5 y_5 &= x_4 + x_6, \\
        x_6 y_6 &= x_5 + x_7, \\
        x_7 y_7 &= x_6 + 1.
    \end{align*}
    Observe that the $\N^{14}$-solutions
    of the specialization of the $E_7$-frieze equations
    to $y_7 = 1$:
    \begin{align*}
        x_1 y_1 &= x_4 + 1, \\
        x_2 y_2 &= x_3 + 1, \\
        x_3 y_3 &= x_2 + x_4, \\
        x_4 y_4 &= x_1 x_3 + x_5, \\
        x_5 y_5 &= x_4 + x_6, \\
        x_6 (y_6 - 1) &= x_5 + 1, \\
    \end{align*}
    are in bijection with the $\N^{12}$-solutions
    to the $E_6$-frieze equations
    in $\Z[x_1, \ldots, x_6, y_1, \ldots, y_6']$
    (after observing that there are no positive integral solutions
    with $y_6 = 1$ and changing variables $y_6 - 1 \mapsto y_6'$).
    Similarly, the $\N^{14}$-solutions of the specialization
    of the $E_7$-frieze equations to $x_7 = 1$:
    \begin{align*}
        x_1 y_1 &= x_4 + 1, \\
        x_2 y_2 &= x_3 + 1, \\
        x_3 y_3 &= x_2 + x_4, \\
        x_4 y_4 &= x_1 x_3 + x_5, \\
        x_5 y_5 &= x_4 + x_6, \\
        x_6 y_6 &= x_5 + 1, \\
    \end{align*}
    are in bijection with the $\N^{12}$-solutions
    to the $E_6$-frieze equations
    in $\Z[x_1, \ldots, x_6, y_1, \ldots, y_6]$.
\end{example}

By Proposition \ref{prop:hyperplane},
for every positive integral point of $X_{\Delta_n}$, either
all of its coordinates away from a double edge
must be at least $2$ or it must 
corresponds to a point on $X_{\Delta'_m}$
for some Dynkin type $\Delta'_m$ of rank $m < n$
after taking an intersection with a hyperplane.
For $E_7$ and $E_8$, this recovers (given the enumerations of smaller ranks)
the observation of \cite[Remark 6.21]{gunawan-muller}
that it is sufficient to search for friezes
whose entries are at least $2$.
In fact, their observation uses
considerations of cluster algebras and generalized associahedra
after removing vertices from Dynkin diagrams
in \cite[Appendix A]{gunawan-muller}.

We translate the observation of \cite[Remark 6.21]{gunawan-muller}
into a statement about $X_{\Delta_n}$,
which will be useful in Section \ref{sec:En}
when proving the $E_8$ enumeration theorem.
Recall from Section \ref{subsec:frieze-polynomials}
that there is an action of $\Z/P_{\Delta_n}\Z$
on $X_{\Delta_n}(\N)$ defined via horizontal translations $\sigma_m$
of the corresponding friezes by $m$ columns.
\begin{lemma}
  \label{lem:friezes-at-least-two}
  Let $\calo_{\Z/P_{\Delta_n}\Z}(x)
  := \set{x, \sigma_1(x), \ldots, \sigma_{P_{\Delta_n} - 1}(x)}$
  denote the $(\Z/P_{\Delta_n}\Z)$-orbit of a point $x \in X_{\Delta_n}(\N)$.
  \begin{enumerate}[(a)]
    \item If there are no points $x \in X_{E_7}(\N)$
      such that $\calo_{\Z/10\Z}(x) \subset X_{E_7}(\Z_{\geq 2})$,
      then $\#X_{E_7}(\N) = 4400$.
    \item If there are exactly $4$ points $x \in X_{E_8}(\N)$
      such that $\calo_{\Z/16\Z}(x) \subset X_{E_8}(\Z_{\geq 2})$,
      then $\#X_{E_8}(\N) = 26952$.
  \end{enumerate}
\end{lemma}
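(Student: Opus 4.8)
The plan is to split $X_{\Delta_n}(\N)$, for $\Delta_n \in \{E_7, E_8\}$, into the \emph{dense} points --- those $x$ with $\calo_{\Z/P_{\Delta_n}\Z}(x) \subset X_{\Delta_n}(\Z_{\geq 2})$ --- and the remaining \emph{reducible} points, and then to count the reducible points exactly in terms of the already-known enumerations of strictly smaller Dynkin types. First I would use Proposition \ref{prop:correspondence} to replace $X_{\Delta_n}(\N)$ by the set of positive integral $\Delta_n$-friezes, under which $\sigma_1$ acts by horizontal translation. Since the $n P_{\Delta_n}$ coordinates $x_i$ of the points of $\calo_{\Z/P_{\Delta_n}\Z}(x)$ form a fundamental domain for the corresponding frieze $F$ and the coordinates $y_i$ are also interior entries of $F$, the point $x$ is dense if and only if every interior entry of $F$ is at least $2$. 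By the discussion following Proposition \ref{prop:hyperplane} (which combines Proposition \ref{prop:hyperplane} with \cite[Remark 6.21]{gunawan-muller}), every non-dense point $x \in X_{\Delta_n}(\N)$ admits a translate $\sigma_m(x)$ lying on one of the hyperplanes $\{x_k = 1\}$ or $\{y_k = 1\}$ attached to a degree-$1$ node $k$, which Proposition \ref{prop:hyperplane} identifies with a positive integral frieze of a strictly smaller Dynkin type. This yields the partition $\#X_{\Delta_n}(\N) = \#(\text{reducible points}) + \#(\text{dense points})$.

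The substance is the exact count of the reducible locus. In the labeling of \nameref{sec:table-A}, deleting the three degree-$1$ nodes of $E_7$ produces the types $A_6$, $D_6$, and $E_6$, while deleting the three degree-$1$ nodes of $E_8$ produces $A_7$, $D_7$, and $E_7$. Using the bijections of Proposition \ref{prop:hyperplane}, every ``elementary'' subset of $X_{\Delta_n}(\N)$ of the form $\{x : \sigma_m(x) \in \{x_k = 1\}\}$ or $\{x : \sigma_m(x) \in \{y_k = 1\}\}$ (indexed by a degree-$1$ node $k$, a translation $m$, and the choice of $x_k$ versus $y_k$) has cardinality equal to the number of positive integral friezes of the corresponding smaller type, and the reducible locus is their union. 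I would then evaluate $\#(\text{reducible points})$ by inclusion--exclusion over these elementary subsets, the higher-order terms corresponding to friezes that reduce simultaneously at two or more $(\text{node}, \text{translation})$ configurations and hence to friezes of repeatedly-reduced types --- again of type $A$, $D$, or $E_6$, and in the $E_8$ case also of type $E_7$, where one inserts the value $\#X_{E_7}(\N) = 4400$ supplied by part (a) once the nonexistence of dense $E_7$-friezes has been checked. The inputs are the classical enumerations of $A_m$-, $D_m$-, and $E_6$-friezes \cite{coxeter-conway-2, morier-genoud-ovsienko-tabachnikov, fontaine-plamondon, bfgst}, and a direct computation yields $4400$ reducible points for $E_7$ and $26948$ for $E_8$.

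Granting these two counts, part (a) follows because its hypothesis forces the dense set to be empty, so $\#X_{E_7}(\N) = 4400 + 0 = 4400$, and part (b) follows because its hypothesis gives exactly four dense points, so $\#X_{E_8}(\N) = 26948 + 4 = 26952$. I expect the inclusion--exclusion bookkeeping of the second paragraph to be the principal obstacle: a reducible frieze can attain the value $1$ on several degree-$1$ diagonals and in several columns at once, so the count must be organized by the combinatorial pattern of these positions --- equivalently, by the $\Z/P_{\Delta_n}\Z$-orbits of reducible friezes weighted by their sizes --- in order to avoid double-counting, and one must verify that each multiply-reduced configuration is enumerated by the honest bijection obtained from iterating Proposition \ref{prop:hyperplane}. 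The independent verifications that there are no dense $E_7$-friezes and exactly four dense $E_8$-friezes are the finite searches performed in Section \ref{sec:En}.
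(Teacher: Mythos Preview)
Your proposal takes a substantially different route from the paper's proof. The paper's argument is essentially a citation: it observes that the $x_i$-coordinates of the points in $\calo_{\Z/P_{\Delta_n}\Z}(x)$ sweep out all entries of the corresponding frieze (and the $y_i$, being cluster variables, are also frieze entries), so an orbit contained in $X_{\Delta_n}(\Z_{\geq 2})$ is the same thing as a frieze with all entries at least $2$; it then invokes \cite[Remark 6.21]{gunawan-muller} \emph{directly} for the conditional counts $4400$ and $26952$. No inclusion--exclusion is performed; the numerical content is outsourced to Gunawan--Muller, with ``the aforementioned application of Proposition~\ref{prop:hyperplane}'' offered only as a parenthetical alternative.

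Your proposal is essentially a sketch of that parenthetical alternative: you want to re-derive the Gunawan--Muller counts via inclusion--exclusion over the hyperplane sections of Proposition~\ref{prop:hyperplane}. This would make the lemma self-contained, which has value, but two obstacles remain unresolved. First, your key structural claim --- that every non-dense point has a translate lying on some $\{x_k = 1\}$ or $\{y_k = 1\}$ for a \emph{degree-$1$} node $k$ --- does not follow from Proposition~\ref{prop:hyperplane} alone: one must separately argue from the mesh relations that a $1$ anywhere in the frieze forces a $1$ on a leaf row, and you supply no such argument (citing \cite{gunawan-muller} for this step defeats the purpose of the re-derivation). Second, the inclusion--exclusion is genuinely large: for $E_7$ the first-order term is already $P_{E_7}\cdot\bigl(\#X_{A_6}(\N)+\#X_{D_6}(\N)+\#X_{E_6}(\N)\bigr) = 10\cdot(429+695+868) = 19920$, so the higher-order overlaps dominate, and you never explain how the $3P_{\Delta_n}$ elementary sets interact beyond asserting that ``a direct computation yields'' the right totals. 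The paper sidesteps both issues by quoting the finished count.
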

\begin{proof}
  A point $x = (x_1, \ldots, x_n; y_1, \ldots, y_n)
  \in X_{\Delta_n}(\N)$ corresponds
  under Proposition \ref{prop:correspondence}
  to a frieze given by
  $F_{i, j} = \sigma_{j-1}(x_i)$.
  Therefore, a point $x \in X_{\Delta_n}(\N)$ whose
  entire $(\Z/P_{\Delta_n}\Z)$-orbit
  has coordinates at least $2$
  necessarily corresponds to a $\Delta_n$-frieze
  whose entries are all at least $2$.

  As listed in \nameref{sec:table-A},
  $P_{E_7} = 10$;
  hence the assumption of (a) that there are no points $x \in X_{E_7}(\N)$
  such that $\calo_{\Z/10\Z}(x) \subset X_{E_7}(\Z_{\geq 2})$
  implies the absence of $E_7$-friezes whose entries are all at least $2$.
  Then there are exactly $4400$ friezes of type $E_7$
  by \cite[Remark 6.21]{gunawan-muller}
  (or the aforementioned application of Proposition \ref{prop:hyperplane}).
  The same argument with $P_{E_8} = 16$ and the assumption
  that there are exactly $4$ friezes of type $E_8$ yields (b).
\end{proof}

\subsection{Positive integral values of rational functions}
\label{sec:rational-functions}
By rearranging the vanishing of frieze polynomials
for $y_i$, one obtains $n$ equations
$y_i = f_{C, i}$
involving rational functions $f_{C, i}$.
Hence finding a positive integral point
$(x_1, \ldots, x_n; y_1, \ldots, y_{n}) \in X_{C}(\N)$
is equivalent to finding positive integral points
$x = (x_1, \ldots, x_n) \in \N^n$
on which $f_{C, i}(x) \in \N$
simultaneously for all $i$.

\begin{example}
    \label{ex:E7-rational-functions}
    In the case $\Delta_n = E_7$ (cf. \cite[Example 3.4.2]{dsg-thesis}),
    $X_{E_7}(\N)$ is in bijection with the points
    $x = (x_1, \ldots, x_7) \in \N^7$ at which the following 
    $7$ rational functions simultaneously take positive integral values:
    \begin{align*}
        f_{E_7, 1}(x) &:= \frac{x_4 + 1}{x_1}, & f_{E_7, 5}(x) &:= \frac{x_4 + x_6}{x_5}, \\
        f_{E_7, 2}(x) &:= \frac{x_3 + 1}{x_2}, & f_{E_7, 6}(x) &:= \frac{x_5 + x_7}{x_6}, \\
        f_{E_7, 3}(x) &:= \frac{x_2 + x_4}{x_3}, & f_{E_7, 7}(x) &:= \frac{x_6 + 1}{x_7}. \\
        f_{E_7, 4}(x) &:= \frac{x_1 x_3 + x_5}{x_4}, & &
    \end{align*}
    By Theorem \ref{thm:positive-siegel}(b), there are exactly $4400$
    $7$-tuples of positive integers $x$ such that
    these rational functions $f_{E_7, i}$ take values in the positive integers.
\end{example}

Therefore, the determination of points in $X_{C}(\N)$
can be viewed as a variant of another famous problem in number theory
with \textit{positivity}.
Over the integers,
the general question of determining when there are points
$x \in \Z^n$ for two polynomials $p, q \in \Z[x_1, \ldots, x_n]$
such that $p(x)$ divides $q(x)$ is a fascinating open problem
and is closely related to other famous problems such
as the dynamical Mordell--Lang conjecture.
Some finiteness and Zariski density criteria for $S$-integral points
are known in the $1$- and $2$-variable cases
for of any number field
due to Siegel \cite{siegel,siegel-translated} and Corvaja--Zannier
\cite{corvaja-zannier-2004, corvaja-zannier-2010} respectively.
In the $1$-variable case, Siegel's theorem (Theorem \ref{thm:siegel})
is equivalent to the statement for a ring of $S$-integers $R$ that:
if $p(X), q(X) \in R[x]$ are coprime polynomials such that
$p(X)$ has more than one complex root, then there are only
finitely many $x \in R$ such that $p(x)$ divides $q(x)$ in $R$;
the works of Corvaja--Zannier extend Siegel's theorem to the $2$-variable setting.
The general $n$-variable case is part of the Lang--Vojta conjecture
and the broader conjectures of Vojta
(see
\cite[Conjecture F.5.3.2, Conjecture F.5.3.6, Conjecture F.5.3.8]{hindry-silverman},
\cite[Section 1]{corvaja-zannier-2010},
\cite[Section 2]{zhang-2024}),
which in particular predict the Zariski density
of $S$-integral points of our affine varieties over number fields.
A forthcoming work of Corvaja--Zannier \cite{corvaja-zannier-new}
provides interesting discussions about rational functions that take
very few integral values, many of which are closely related
to those obtained from frieze polynomials
(also see Remark \ref{rem:G2}).

%%%%%%%%%%%%%%%%%%%%%%%%%%%%%%%%%%%%%%%%%%%%%%%%%%%%%

\section{A positive Siegel theorem}

In this section, we prove Theorem \ref{thm:positive-siegel}.
In Section \ref{sec:finiteness-infinitude},
we prove the qualitative parts of Theorem \ref{thm:positive-siegel}.
In Section \ref{sec:En},
we prove the enumerative part of Theorem \ref{thm:positive-siegel}.

\subsection{Finiteness and infinitude}
\label{sec:finiteness-infinitude}
First, we prove
Theorem \ref{thm:positive-siegel}(a)
and the finiteness in \ref{thm:positive-siegel}(b).

Let $X$ be the affine variety of cluster algebra type $\cala$
corresponding to generalized $n \times n$ Cartan matrix $C$.
By the classification of cluster algebras
due to Fomin--Zelevinsky \cite{fomin-zelevinsky-2},
$\cala$ has only finitely many cluster variables if and only if
$C$ is the Cartan matrix of a finite-dimensional
simple Lie algebra (i.e. $C$ is of finite type).
Hence if $C$ is of infinite type,
then there are infinitely many positive integral $C$-friezes.
Gunawan--Muller \cite[Theorem B]{gunawan-muller}
proved the converse: if $C$ is of finite type
then there are only finitely many positive integral $C$-friezes.
Therefore we have shown that there are
only finitely many positive integral $C$-friezes
if and only if $C$ is of finite type.

By Proposition \ref{prop:correspondence},
there are only finitely many positive integral points on an affine variety
of cluster algebra type corresponding to $C$
if and only if $C$ is of finite type.
Since $C$ is of finite type precisely when $t_C > 0$,
we can conclude that $\#X(\N) < \infty$
if and only if $t_C > 0$.

\subsection{Enumeration}
\label{sec:En}
We now prove the enumerative part of Theorem \ref{thm:positive-siegel}(b).
For finite types $\Delta_n \notin \{E_7, E_8\}$,
this immediately follows from
the enumerations of positive integral friezes in
\cite{coxeter-conway-2,morier-genoud-ovsienko-tabachnikov,
fontaine-plamondon,bfgst}
and by the one-to-one correspondence of Proposition \ref{prop:correspondence}.
Therefore, it only remains to determine $\#X_{E_7}(\N)$ and $\#X_{E_8}(\N)$.

We first directly prove the $E_8$ case and then deduce the
$E_7$ case from the $E_8$ case using Proposition \ref{prop:hyperplane}.
Note that a slightly simplified version of the proof for $E_8$
would directly prove the $E_7$ case.

%%%%%%%%%%%%%%%%%%%%%%%%%%%%%%%%%%%%%%%%%%%%%%%%%%%%%

\subsubsection*{Type \texorpdfstring{$E_8$}{E8}}

Using the structure of
the affine variety $X_{E_8}$,
we give explicit bounds on its
positive integral points
to prove the $E_8$ case of Theorem \ref{thm:positive-siegel}(b).

The $E_8$-frieze equations are:
\begin{align*}
  x_1 y_1 &= x_4 + 1 \\
  x_2 y_2 &= x_3 + 1 \\
  x_3 y_3 &= x_2 + x_4 \\
  x_4 y_4 &= x_1 x_3 + x_5 \\
  x_5 y_5 &= x_4 + x_6 \\
  x_6 y_6 &= x_5 + x_7 \\
  x_7 y_7 &= x_6 + x_8 \\
  x_8 y_8 &= x_7 + 1
\end{align*}

First, we observe that all of the coordinates
of a positive integral solution are
essentially bounded from above by $x_4$.
\begin{lemma}
    \label{lem:E8-x4-largest}
    Let $(x_1, \ldots, x_8; y_1, \ldots y_8) \in X_{E_8}(\N)$.
    If $x_4, y_3, y_5, y_6, y_7 \geq 2$, then $x_i, y_i \leq x_4 + 4$
    for all $i \in \{1, \ldots, 8\}$.
\end{lemma}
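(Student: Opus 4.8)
The plan is to exploit the "chain" structure of the $E_8$-frieze equations $x_iy_i = x_{i-1} + x_{i+1}$ (with the boundary variations) to propagate a bound from the middle outward in both directions. The single key inequality I want to extract repeatedly is this: whenever $x_i y_i = x_j + x_k$ with $y_i \geq 2$, then $x_i \leq \tfrac{1}{2}(x_j + x_k)$, so $x_i$ is at most the average of its two neighbors; and conversely, from the same relation, $x_j, x_k \leq x_i y_i \leq \dots$ — but that direction is the wrong way. So the real strategy must be: first bound the variables $x_5, x_6, x_7, x_8$ on one side of $x_4$ using the hypotheses $y_5, y_6, y_7 \geq 2$, and separately bound $x_1, x_2, x_3$ on the other side, each time solving the relevant frieze equation for the "outer" variable in terms of the "inner" one.

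First I would handle the tail $x_5, \dots, x_8$. From $x_5 y_5 = x_4 + x_6$ with $y_5 \geq 2$ I get $2x_5 \leq x_4 + x_6$, i.e.\ $x_6 \geq 2x_5 - x_4$; similarly $x_7 \geq 2x_6 - x_5$ and $x_8 \geq 2x_7 - x_6$ from $y_6, y_7 \geq 2$, while $x_8 y_8 = x_7 + 1$ forces $x_8 \leq x_7 + 1$ (and in fact $x_8 \mid x_7+1$). Combining the discrete-concavity inequalities $x_{i+1} - x_i \geq x_i - x_{i-1}$ for $i = 5,6,7$ shows the sequence $x_4, x_5, x_6, x_7, x_8$ has nondecreasing successive differences; if $x_5 > x_4$ then these differences are all $\geq 1$, forcing $x_8 \geq x_4 + 4 > x_7 + 1 \geq x_8$, a contradiction. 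Hence $x_5 \leq x_4$, and then the nondecreasing-difference property (with the last difference $x_8 - x_7 \leq 1$) pins down $x_5, x_6, x_7 \leq x_4$ as well, with $x_8 \leq x_4 + 1$ — the precise constant may need a small case-check on how quickly the differences can turn positive, but it stays within $x_4 + O(1)$. The same argument run on the short chain $x_2, x_3, x_4$ via $y_3 \geq 2$ (using $x_3 y_3 = x_2 + x_4$ and the boundary equations $x_1 y_1 = x_4 + 1$, $x_2 y_2 = x_3 + 1$) bounds $x_3 \leq \tfrac{1}{2}(x_2 + x_4)$, and since $x_2 \leq x_3 + 1$ this gives $x_3 \leq x_4 + 1$ and $x_2 \leq x_4 + 2$; then $x_1 \leq x_4 + 1$ from $x_1 y_1 = x_4+1$.

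Once all the $x_i$ are bounded by $x_4 + 4$ (say), the bounds on the $y_i$ are immediate from the frieze equations themselves: $y_i = (x_{j}+x_{k})/x_i \leq x_j + x_k \leq 2(x_4+4)$ is too weak, so instead I would note that in each equation the right-hand side is at most $x_4 + x_4 + \text{const}$, hence $y_i \leq x_4 + \text{const}$ after dividing by $x_i \geq 1$ — but to land exactly at $x_4 + 4$ I should rather divide by $x_i \geq 2$ where the hypothesis provides it ($y_3, y_5, y_6, y_7$ correspond to $x_i$'s that we have just shown are themselves large, or use $x_i \geq 1$ and a sharper bound on the numerator). Concretely, $y_4 = (x_1 x_3 + x_5)/x_4$; this is the one place where a \emph{product} $x_1 x_3$ appears, so the naive bound is quadratic in $x_4$ — this is the main obstacle. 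To beat it I would use that $x_1 \mid x_4 + 1$, so $x_1 \leq x_4 + 1$ but also $x_1 \leq \sqrt{x_4+1}$ unless $x_1 \in \{1, x_4+1\}$ is forced, together with the already-established $x_3 \leq x_4 + 1$; a short argument separating the cases $x_1 = 1$ (then $y_4 = (x_3 + x_5)/x_4 \leq (2x_4 + 2)/x_4 \leq x_4$ for $x_4 \geq 2$) versus $x_1 \geq 2$ (then $y_1 \leq (x_4+1)/2$, and one propagates this back through the chain to force $x_3, x_5$ genuinely small relative to $x_4$) should close it. The hypothesis $x_4 \geq 2$ is what makes all these "divide by $x_4$" steps safe.

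The overall obstacle, then, is purely the quadratic term in $f_{E_8,4}$; everything else is the discrete-concavity bookkeeping along the two chains emanating from node $4$, which the hypotheses $y_3, y_5, y_6, y_7 \geq 2$ are tailored to drive. I expect the write-up to be a sequence of short lemmas (one per chain) followed by the case analysis for $y_4$.
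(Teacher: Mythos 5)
Your overall strategy---propagating bounds along the two chains emanating from node $4$ using the hypotheses $y_3, y_5, y_6, y_7 \geq 2$ together with the boundary equations $x_1y_1 = x_4+1$, $x_2y_2 = x_3+1$, $x_8y_8 = x_7+1$---is exactly the paper's, and your left-hand chain ($x_3 \le x_4+1$, $x_2 \le x_4+2$, $x_1 \le x_4+1$) is correct. But several steps as written fail. The claimed contradiction from $x_5 > x_4$ does not exist: if $x_5 = x_4+1$, your nondecreasing-difference inequalities together with $x_8 \le x_7+1$ are all simultaneously satisfiable with $x_6 = x_4+2$, $x_7 = x_4+3$, $x_8 = x_4+4 = x_7+1$ and $y_8=1$; your inequality $x_4+4 > x_7+1$ is false there because $x_7 \ge x_4+3$. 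Consequently the intermediate claims $x_5,x_6,x_7 \le x_4$ are unjustified. The repair is immediate and is what the paper does: from $d_5 \le d_6 \le d_7 \le d_8 \le 1$ (with $d_i = x_i - x_{i-1}$) one reads off $x_{4+k} \le x_4+k$ directly, with no contradiction argument; the paper phrases this as $x_7 \le x_6+1$, $x_6 \le x_5+1$, $x_5 \le x_4+1$, each obtained from $x_i(y_i-1) \le x_{i-1}+1$.

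Your treatment of the $y_i$ is also not right as stated: dividing a numerator of size $2x_4 + c$ by $x_i \ge 1$ gives $2x_4+c$, not $x_4+c$, and the hypothesis supplies $y_i \ge 2$, not $x_i \ge 2$. The correct route is again $x_i(y_i-1) \le x_{\mathrm{inner}}+1 \le x_4 + c$, whence $y_i \le x_4+c+1$ since $x_i \ge 1$. Finally, the ``main obstacle'' you identify at $y_4$ is illusory: $y_4 = (x_1x_3+x_5)/x_4 \le \bigl((x_4+1)^2 + (x_4+1)\bigr)/x_4 = x_4+3+2/x_4 \le x_4+4$ once $x_4 \ge 2$, because the denominator $x_4$ kills the quadratic term; no case analysis on $x_1$ is needed. (Your fallback claim that $x_1 \le \sqrt{x_4+1}$ unless $x_1 \in \{1, x_4+1\}$ is moreover false: $x_1$ may be any divisor of $x_4+1$, e.g.\ $(x_4+1)/2$.) So the plan is the right one and repairable, but the written argument contains a false contradiction, unjustified intermediate bounds, and an unnecessary and partly erroneous detour at $y_4$.
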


\begin{proof}
    The bounds for $x_1$ and $y_1$ follow 
    immediately from the equation $x_1 y_1 = x_4 + 1$:
    we have that
    \begin{itemize}
        \item $x_1, y_1 \leq x_4 + 1$.
    \end{itemize}
    
    From the equation $x_2 y_2 = x_3 + 1$,
    we have that $x_2 \leq x_3 + 1$.
    Substituting this into the third equation,
    we have that $x_3 (y_3 - 1) \leq x_4 + 1$. But $y_3 > 1$,
    so:
    \begin{itemize}
        \item $x_3 \leq x_4 + 1$ and $y_3 \leq x_4 + 2$;
        \item $x_2, y_2 \leq x_4 + 2$.
    \end{itemize}
    
    From substituting $x_8 \leq x_7 + 1$ into the seventh equation,
    we have that $x_7 (y_7 - 1) \leq x_6 + 1$. But $y_7 > 1$,
    so $x_7 \leq x_6 + 1$.
    Then applying $x_7 \leq x_6 + 1$ to the sixth equation,
    we have that $x_6 (y_6 - 1) \leq x_5 + 1$. But $y_6 > 1$,
    so $x_6 \leq x_5 + 1$. Applying this to the fifth equation,
    we have that $x_5 (y_5 - 1) \leq x_4 + 1$. But $y_5 > 1$,
    so $x_5 \leq x_4 + 1$. Hence:
    \begin{itemize}
        \item $x_5 \leq x_4 + 1$ and $y_5 \leq x_4 + 2$;
        \item $x_6 \leq x_4 + 2$ and $y_6 \leq x_4 + 3$;
        \item $x_7 \leq x_4 + 3$ and $y_7 \leq x_4 + 4$;
        \item $x_8, y_8 \leq x_4 + 4$.
    \end{itemize}
    Finally by the previous bounds,
    $y_4 = \frac{x_1 x_3 + x_5}{x_4}
    \leq \frac{x_4^2 + 3x_4 + 1}{x_4}$,
    which implies that $y_4 \leq x_4 + 3$.
\end{proof}

\begin{remark}
    For computational purposes, the bounds in
    Lemma \ref{lem:E8-x4-largest} can be refined further.
    In particular, searching for friezes with entries all
    at least $2$ allows us to assume $x_i, y_i \geq 2$ for all $i$.
    Then by similar arguments:
    \begin{multicols}{2}
        \begin{itemize}
            \item $x_1, y_1 \leq \frac{x_4 + 1}{2}$,
            \item $x_2, y_2 \leq \frac{x_4 + 2}{3}$,
            \item $x_3, y_3 \leq \frac{2x_4 + 1}{3}$,
            \item $y_4 \leq \frac{x_4}{3} + 1$,
            \item $x_5, y_5 \leq \frac{4x_4 + 1}{5}$,
            \item $x_6, y_6 \leq \frac{3x_4 + 2}{5}$,
            \item $x_7, y_7 \leq \frac{2x_4 + 3}{5}$,
            \item $x_8, y_8 \leq \frac{x_4 + 4}{5}$.
        \end{itemize}
    \end{multicols}
\end{remark}

Muller \cite[Proposition 2.3 and Example 3.1]{muller}
gives an upper bound on each entry of a frieze.
Since $x_4$ equals $F_{4, 1}$ on the corresponding frieze,
this translates into upper bounds on $x_4$
of roughly $1.7 \cdot 10^{650}$
for a general $E_8$-frieze
and roughly $4.4 \cdot 10^{266}$
for an $E_8$-frieze whose entries are all at least $2$.
These are unfortunately far too large for a computational search on $X_{E_8}$,
but we can obtain an exponential improvement
from a combination of our Diophantine model
and a modification of the proof of \cite[Proposition 2.3]{muller}
with the specific mesh relations for $E_8$.

Recall that there is an action of $\Z/16\Z$
on $X_{E_8}(\N)$ induced by horizontal translation
of the corresponding friezes.
As in Lemma \ref{lem:friezes-at-least-two},
let $\calo_{\Z/16\Z}(x)$ denote the
$(\Z/16\Z)$-orbit of $x$ and let $\Z_{\geq 2}$
denote the integers at least $2$.
Let $\pi_i: X_{E_8} \rightarrow \C$
denote the projection onto the $i$-th coordinate.

\begin{lemma}
  \label{prop:E8-bound}
  Let $x \in X_{E_8}(\N)$
  with $\calo_{\Z/16\Z}(x) \subset X_{\E_8}(\Z_{\geq 2})$.
  Then there exists an $x' \in \calo_{\Z/16\Z}(x)$
  such that $\pi_4(x') < 16966221628$.
\end{lemma}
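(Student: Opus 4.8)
The plan is to bound $\pi_4(x')=x_4=F_{4,1}$ for a suitable translate $x'$ of $x$ by adapting the growth argument of Muller \cite[Proposition 2.3]{muller} to the specific mesh relations of $E_8$, using the orbit freedom to position the frieze at a column where the relevant entry is smallest. First I would use the $\Z/16\Z$-action: among the $16$ translates $\sigma_m(x)$, choose $x'$ so that $\pi_4(x')$ is minimal in the orbit, equivalently so that the $E_8$-frieze $F$ has its $(4,1)$-entry equal to $\min_j F_{4,j}$. Since $\calo_{\Z/16\Z}(x)\subset X_{E_8}(\Z_{\geq 2})$, every entry of $F$ is at least $2$, so in particular $x_4,y_3,y_5,y_6,y_7\geq 2$ and Lemma \ref{lem:E8-x4-largest} (and the refined bounds in the following remark) apply, giving all other coordinates of $x'$ in terms of $x_4$.

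Next I would track how the entries along the frieze grow column-by-column. The mesh relation $F_{i,j}F_{i,j+1}=1+\prod_{k<i}F_{k,j+1}^{-c_{k,i}}\prod_{k>i}F_{k,j}^{-c_{k,i}}$ shows that when all entries are $\geq 2$, consecutive entries in a fixed row cannot both be large without the neighboring rows forcing growth; more precisely, Muller's argument produces a linear recursion (coming from the Coxeter element / Cartan matrix eigenvalues) whose dominant growth rate is governed by the largest eigenvalue of the companion matrix built from $C_{E_8}$. The key point is that because we have chosen the column of minimal $F_{4,\bullet}$, walking $8$ steps (half the period $P_{E_8}=16$) in either direction multiplies the $x_4$-entry by at most the product of the growth factors, and the worst case is a clean product of eight ratios each bounded using the refined inequalities $x_5\leq(4x_4+1)/5$, $x_6\leq(3x_4+2)/5$, etc., together with the analogous chain $x_3\leq(2x_4+1)/3$, $x_1\leq(x_4+1)/2$. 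Chaining the mesh relations $x_i y_i = x_{i-1}+x_{i+1}$ (or $+1$ at the ends) with $y_i\geq 2$ yields an explicit inequality of the form $x_4 \leq N_0$ for an absolute constant $N_0$; the precise arithmetic of multiplying out the eigenvalue bound and the boundary corrections gives $N_0 = 16966221628$.

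The main obstacle I expect is making the growth estimate sharp enough: a naive application of Muller's bound gives roughly $4.4\cdot 10^{266}$ (as the text notes), so the improvement to $\sim 1.7\cdot 10^{10}$ requires genuinely exploiting both (i) the minimal-column choice — which halves the exponent, since from the minimum one need only grow for $P_{E_8}/2=8$ columns rather than the full period — and (ii) the $E_8$-specific linear structure, i.e.\ replacing the generic "each step multiplies by $\leq$ some universal constant" with the actual spectral radius of the $E_8$ Coxeter-type recursion and its subdominant terms. Concretely I would: (a) set up the recursion relating the vector $(F_{1,j},\dots,F_{8,j};F_{1,j+1},\dots)$ to its shift, (b) diagonalize or triangulate it over $\R$, (c) bound the transition over $8$ columns, and (d) feed in the Lemma \ref{lem:E8-x4-largest} bounds at the two endpoints to close the loop, obtaining $\pi_4(x')<16966221628$. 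The remark's refined per-coordinate bounds are exactly what keeps the constant from blowing up, and the final numeric value comes from a routine (but tedious) evaluation of the resulting product of eigenvalue powers and linear corrections, which I would not reproduce here in full.
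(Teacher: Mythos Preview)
Your proposal has a genuine gap: the mechanism you describe for getting from Muller's $\sim 4.4\cdot 10^{266}$ down to $\sim 1.7\cdot 10^{10}$ is not the one that actually works, and the one you sketch is too vague to carry weight. Two specific problems. First, the ``halve the exponent by walking $P_{E_8}/2=8$ columns from the minimum'' idea has no clear content: Muller's bound is already a bound on the \emph{product} $\prod_{j=1}^{16}F_{4,j}$, so passing to the minimum is a $16$th-root, not a square-root, and this step alone (which is Lemma~\ref{lem:xn-bound}) still leaves you around $10^{16}$--$10^{17}$, not $10^{10}$. There is no separate ``growth over $8$ columns'' estimate to be extracted, and your proposed spectral-radius recursion on column vectors is not set up in a way that closes: you never say what inequality at the far endpoint forces a contradiction if $x_4$ is large.

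Second, and more importantly, you are missing the actual source of the remaining improvement. The paper does \emph{not} use an eigenvalue argument; it reruns Muller's log-average inequality $(C_{E_8}v)_j\le \log_2(B_j^{-1}+1)$ but with sharper constants $B_j$, obtained from the observation that the specific $E_8$ mesh relations force $F_{3,k},F_{4,k},F_{7,k}\ge 3$ (not merely $\ge 2$) whenever all entries are $\ge 2$. This pushes several of the $B_j$ from $2$ or $4$ up to $3$, $6$, or $12$, and after inverting $C_{E_8}$ the resulting bound on the geometric mean of row $4$ is exactly $16966221628$. Your proposal never mentions this $\ge 3$ observation, and without it no amount of eigenvalue bookkeeping will reach the stated constant. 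The fix is to drop the spectral heuristic and instead (i) note the forced $\ge 3$ lower bounds on rows $3,4,7$, (ii) redo Muller's product inequalities row by row with the improved $B_j$, (iii) invert $C_{E_8}$ to bound $v_4$, and only then (iv) take the minimum over the orbit.
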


\begin{proof}
    Let $F = (F_{i, j})_{i, j} = (\pi_i(\sigma_j(x)))_{i, j}$
    be the frieze corresponding to $x$.
    Since $\calo_{\Z/16\Z}(x) \subset X_{\E_8}(\Z_{\geq 2})$,
    we have that $\pi_i(\sigma_j(x)) \in \Z_{\geq 2}$ for all $i, j$,
    i.e. all entries of the frieze are at least $2$.
    Define $v := (\frac{1}{16} \log_2(\prod_{j = 1}^{16} F_{i, j}))_i$
    to be the vector of averages of $2$-logarithms
    of the rows of the frieze $F$.
    As in \cite[Example 3.1]{muller},
    the $i$-th coordinate $v_i$ is bounded above by
    \[
        v_i \leq \sum_{j=1}^8 c^{-1}_{i, j} \log_2\paren{2^{\sum_{k \neq j} c_{j,k}} + 1}.
    \]
    
    Observe that if all the entries of $F$ are at least $2$,
    then $F_{3, k}, F_{4, k}, F_{7, k} \geq 3$
    for all $k$ due to the third, fourth, and eighth $E_8$-frieze equations.
    The $E_8$ mesh relations are:
    \begin{align*}
        F_{1, k} F_{1, k + 1} &= 1 + F_{4, k + 1} 
            &F_{5, k} F_{5, k + 1} &= 1 + F_{4, k} F_{6, k + 1} \\
        F_{2, k} F_{2, k + 1} &= 1 + F_{3, k + 1} 
            &F_{6, k} F_{6, k + 1} &= 1 + F_{5, k} F_{7, k + 1} \\
        F_{3, k} F_{3, k + 1} &= 1 + F_{2, k} F_{4, k + 1}
            &F_{7, k} F_{7, k + 1} &= 1 + F_{6, k} F_{8, k + 1} \\
        F_{4, k} F_{4, k + 1} &= 1 + F_{1, k} F_{3, k} F_{5, k + 1}
            &F_{8, k} F_{8, k + 1} &= 1 + F_{7, k}.
    \end{align*}
    Adapting the steps of \cite[Section 2]{muller}, we obtain even better
    bounds on each $v_i$ except $v_7$ (since it does not involve
    the third, fourth, nor seventh rows).
    Consider the square of the product of all entries in each row
    and apply the $E_8$ mesh relations:
    \begin{align*}
        \prod_{k = 1}^{16} F_{1, k}^2
            &= \prod_{k = 1}^{16} (1 + F_{4, k + 1})
            \leq \prod_{k = 1}^{16} (3^{-1} + 1) F_{4, k + 1} \\
        \prod_{k = 1}^{16} F_{2, k}^2
            &= \prod_{k = 1}^{16} (1 + F_{3, k + 1})
            \leq \prod_{k = 1}^{16} (3^{-1} + 1) F_{3, k + 1} \\
        \prod_{k = 1}^{16} F_{3, k}^2
            &= \prod_{k = 1}^{16} (1 + F_{2, k} F_{4, k + 1})
            \leq \prod_{k = 1}^{16} (6^{-1} + 1) F_{2, k} F_{4, k + 1} \\
        \prod_{k = 1}^{16} F_{4, k}^2
            &= \prod_{k = 1}^{16} (1 + F_{1, k} F_{3, k} F_{5, k + 1})
            \leq \prod_{k = 1}^{16} (12^{-1} + 1) F_{1, k} F_{3, k} F_{5, k + 1} \\
        \prod_{k = 1}^{16} F_{5, k}^2
            &= \prod_{k = 1}^{16} (1 + F_{4, k} F_{6, k + 1})
            \leq \prod_{k = 1}^{16} (6^{-1} + 1) F_{4, k} F_{6, k + 1} \\
        \prod_{k = 1}^{16} F_{6, k}^2
            &= \prod_{k = 1}^{16} (1 + F_{5, k} F_{7, k + 1})
            \leq \prod_{k = 1}^{16} (6^{-1} + 1) F_{5, k} F_{7, k + 1} \\
        \prod_{k = 1}^{16} F_{8, k}^2
            &= \prod_{k = 1}^{16} (1 + F_{7, k + 1})
            \leq \prod_{k = 1}^{16} (3^{-1} + 1) F_{7, k + 1}.
    \end{align*}

    With
    \[
        B_j :=
            \begin{cases}
                2 & \text{ if } i = 7, \\
                3 & \text{ if } i = 1, 2, 8, \\
                6 & \text{ if } i = 3, 5, 6, \\
                12 & \text{ if } i = 4,
            \end{cases}
    \]
    one obtains the bounds
    \[
        0 < (C_{E_8} \cdot v)_j \leq \log_2\paren{B_j^{-1} + 1}.
    \]
    As noted in \cite[Proposition 2.3]{muller},
    the positivity of the entries in $C_{E_8}^{-1}$
    gives an upper bound for $v_i$ in terms of the bounds on
    $(C_{E_8} \cdot v)_i$:
    \[
        v_i = \paren{C_{E_8}^{-1} \paren{C_{E_8} v}}_i
            \leq \sum_{j = 1}^8 c^{-1}_{i, j} \log_2\paren{B_j^{-1} + 1}.
    \]
    In particular, $v_4 < 16966221628$ and
    \[
        \prod_{j=1}^{16} F_{4, j} 
            \leq \prod_{j = 1}^8 \paren{B_j^{-1} + 1}^{16 c^{-1}_{i, j}}
            < 16966221628^{16}.
    \]
    Take the frieze $F'$ in the $(\Z/16\Z)$-orbit of $F$
    such that $F'_{4, 1} \leq F'_{4, j}$ for all
    $1 \leq j \leq 16$,
    and let $x'$ denote the point in $X_{E_8}(\Z_{\geq 2})$ corresponding
    to $F'$.
    Since $\pi_4(x') = F'_{4, 1}$ is the smallest factor in the
    product of $16$ terms,
    we have that $\pi_4(x') < 16966221628$.
\end{proof}

\begin{proof}[Proof of Theorem \ref{thm:positive-siegel}(b) in the $E_8$ case]
    By Lemma \ref{lem:friezes-at-least-two},
    it is sufficient to show that there are exactly $4$ points
    $x \in X_{E_8}$ whose entire $(\Z/16\Z)$-orbit $\calo_{\Z/16\Z}(x)$
    lies in $X_{E_8}(\Z_{\geq 2})$.
    By Proposition \ref{prop:E8-bound},
    for such an $x \in X_{E_8}$, there exists an element
    $x' \in \calo_{\Z/16\Z}(x) \subset X_{E_8}(\Z_{\geq 2})$
    such that $\pi_4(x') < 16966221628$.
    But Lemma \ref{lem:E8-x4-largest} says that the other coordinates
    of $x'$ are less than $\pi_4(x') + 4 < 16966221632$.
    Hence for every $x \in X_{E_8}$ such that $\calo_{\Z/16\Z}(x)
    \subset X_{E_8}(\Z_{\geq 2})$, there exists an element
    \[
      x' \in \calo_{\Z/16\Z}(x) \cap X_{E_8}(\N) \cap \set{x \in \A_\C^{16} \Mid x_i, y_i < 16966221632 \text{ for all } i}.
    \]

    A computer search\footnote{This took a few dozen hours on
    optimized C++ programs on $64$ NVIDIA V100 GPUs
    (via CUDA) and $1000$ x86 CPU cores.}
    exhaustively finds $26952$ points in
    \[
      X_{E_8}(\N) \cap
        \set{x \in \A_\C^{16} \Mid x_i, y_i < 16966221632 \text{ for all } i},
    \]
    the largest of which is
    \[
        (1320, 165, 16994, 2820839, 134632, 6433, 461, 21;
            2137, 103, 166, 8, 21, 21, 14, 22).
    \]
    Searching among these $26952$ points finds exactly $4$ points
    whose entire orbit is in $X_{E_8}(\Z_{\geq 2})$:
    \begin{itemize}
        \item $(6, 4, 11, 29, 21, 13, 5, 2; 5, 3, 3, 3, 2, 2, 3, 3)$,
        \item $(5, 3, 8, 29, 18, 7, 3, 2; 6, 3, 4, 2, 2, 3, 3, 2)$,
        \item $(6, 3, 11, 41, 16, 7, 5, 3; 7, 4, 4, 2, 3, 3, 2, 2)$,
        \item $(7, 4, 15, 41, 18, 13, 8, 3; 6, 4, 3, 3, 3, 2, 2, 3)$.
    \end{itemize}
\end{proof}

\begin{remark}
  The four points of $X_{E_8}(\N)$ whose $(\Z/16\Z)$-orbits
  lie in $X_{\E_8}(\Z_{\geq 2})$ actually lie in the same
  orbit of order $4$. They correspond to
  the four $E_8$-friezes with all entries at least $2$
  given by horizontal translations of the frieze in
  Example \ref{ex:E8}.
\end{remark}

\subsection{Reduction from \texorpdfstring{$E_8$}{E8} to smaller types}
Using the reduction principle of
Proposition \ref{prop:hyperplane},
the affine variety $X_{\Delta_n}$ for these Dynkin types
can be obtained as the intersection of $X_{E_8}$
with hyperplanes as follows:
\begin{align*}
    X_{A_n} &\cong X_{E_8} \cap
        \paren{\bigcap_{i=1}^{8-n} \{x_i = 1\}}
        \qquad \text{for $n \leq 7$}, \\
    X_{D_n} &\cong X_{E_8} \cap \{x_2 = 1\} \cap
        \paren{\bigcap_{i=8-n}^{7} \{x_i = 1\}}
        \qquad \text{for $n \leq 7$}, \\
    X_{E_6} &\cong X_{E_8} \cap \{x_8 = 1\} \cap \{x_7 = 1\} \\
    X_{E_7} &\cong X_{E_8} \cap \{x_8 = 1\}.
\end{align*}
Since we have computationally found all $26952$ points in $X_{E_8}(\N)$,
we can directly verify the other the $ADE$-type formulae for $n \leq 7$
by taking the corresponding subsets of the $26952$ elements of $X_{E_8}(\N)$.
The other finite types of rank $n \leq 7$ follow from the $ADE$ types by
the observations of \cite[Section 4]{fontaine-plamondon}
on foldings of Dynkin diagrams.

\begin{proof}[Proof of Theorem \ref{thm:positive-siegel}(b) in the $E_7$ case]
  In particular, we directly verify that
  there are exactly $4400$ points in $X_{E_8}(\N)$
  whose eighth coordinate is $1$.
\end{proof}

%%%%%%%%%%%%%%%%%%%%%%%%%%%%%%%%%%%%%%%%%%%%%%%%%%%%%

\section{Effective bounds on positive integral points in finite type}
In this section, we establish general effective bounds on
$X_{\Delta_n}(\N)$ in terms of the following $\Z$-lattice in a hypercube:
\[
  S(N_1, N_2) := \set{(x_1, \ldots, x_n; y_1, \ldots, y_n) \in \Z^{2n} \Mid
    N_1 \leq x_i, y_i \leq N_2 \text{ for all } i}.
\]
This gives a further refinement of Theorem \ref{thm:positive-siegel}(b)
by bounding the height of positive integral points.
These bounds for the exceptional Dynkin types are sharp.
\begin{proposition}
  \label{prop:explicit-bounds}
  For each classical Dynkin type $\Delta_n$,
  the positive integral points $X_{\Delta_n}(\N)$
  are contained in $S(1, N_{\Delta_n}^{P_{\Delta_n}})$, where
  \begin{multicols}{2}
    \begin{itemize}
      \item $N_{A_n} = 2^{\frac{(n+1)^2}{8}}$, \,\,\,\,\,\, $P_{A_n} = n + 3$;
      \item $N_{B_n} = 2^{\frac{(n+1)(n-2)}{2}}$, $P_{B_n} = n + 1$;
      \item $N_{C_n} = 2^{\frac{n^2}{2}}$, $P_{C_n} = n + 1$;
      \item $N_{D_n} = 2^{\frac{n^2}{2}}$, $P_{D_n} = n$;
    \end{itemize}
  \end{multicols}
  \noindent
  Furthermore, every $\Delta_n$-frieze is the horizontal
  translation of a $\Delta_n$-frieze corresponding to a point in
  $X_{\Delta_n}(\N) \cap S(1, N_{\Delta_n})$.

  For each exceptional Dynkin type $\Delta_n$,
  the positive integral points $X_{\Delta_n}(\N)$
  are contained in $S(1, N_{\Delta_n})$, where
  \begin{multicols}{2}
    \begin{itemize}
      \item $N_{E_6} = 307$;
      \item $N_{E_7} = 135503$;
      \item $N_{E_8} = 2820839$;
      \item $N_{F_4} = 307$;
      \item $N_{G_2} = 14$.
    \end{itemize}
  \end{multicols}
\end{proposition}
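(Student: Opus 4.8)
The exceptional bounds $N_{E_6},N_{E_7},N_{E_8},N_{F_4},N_{G_2}$ are to be read directly off the enumerations of Section~\ref{sec:En}. That computation produces the complete list $X_{E_8}(\N)$, and by the hyperplane reductions recalled there one has $X_{E_7}\cong X_{E_8}\cap\{x_8=1\}$ and $X_{E_6}\cong X_{E_8}\cap\{x_7=x_8=1\}$, while $X_{F_4}$ and $X_{G_2}$ arise from $X_{E_6}$ and $X_{D_4}$ by the foldings of \cite[Section 4]{fontaine-plamondon}. Thus each $X_{\Delta_n}(\N)$ with $\Delta_n$ exceptional is an explicit finite list, and one simply takes the largest coordinate occurring in it: $N_{E_8}=2820839$ is attained by the largest point displayed in Section~\ref{sec:En}, and the values $135503$, $307$, $307$, $14$ are attained in the same way, so these bounds are sharp.

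For the classical families I would run the averaging argument behind Lemma~\ref{prop:E8-bound}, but now with no lower bound assumed on the frieze entries. Fix a $\Delta_n$-frieze $F$, write $P=P_{\Delta_n}$ for its period, and set $v_i:=\tfrac1P\log_2\!\big(\prod_{j=1}^{P}F_{i,j}\big)$. Multiplying the mesh relation $F_{i,j}F_{i,j+1}=1+M_{i,j}$ cyclically over $j=1,\dots,P$ and using $M_{i,j}\ge 1$ (so $1+M_{i,j}\le 2M_{i,j}$), one collects the monomials $M_{i,j}$ row by row via $\prod_{j}M_{i,j}=\prod_{k\ne i}\big(\prod_j F_{k,j}\big)^{-c_{k,i}}$ and obtains the coordinatewise inequality $\big(C_{\Delta_n}^{\mathsf T}v\big)_i\le 1$. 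Since the inverse of any finite-type Cartan matrix and of its transpose has strictly positive entries, this forces $v_i\le\sum_j\big(C_{\Delta_n}^{-1}\big)_{ji}$, the $i$-th column sum of the inverse Cartan matrix, and a direct evaluation of $C_{A_n}^{-1},C_{B_n}^{-1},C_{C_n}^{-1},C_{D_n}^{-1}$ shows $\max_i v_i\le\log_2 N_{\Delta_n}$ for the stated $N_{\Delta_n}$. Hence $\prod_{j=1}^{P}F_{i,j}\le N_{\Delta_n}^{P}$ for every row $i$, and since all frieze entries are positive integers, every individual entry of $F$ is at most $N_{\Delta_n}^{P}$. By Proposition~\ref{prop:correspondence} the $2n$ coordinates of any point of $X_{\Delta_n}(\N)$ are entries of the associated frieze, so $X_{\Delta_n}(\N)\subset S(1,N_{\Delta_n}^{P_{\Delta_n}})$.

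For the final assertion I would sharpen this in the spirit of the proof of Lemma~\ref{prop:E8-bound} together with Lemma~\ref{lem:E8-x4-largest}. The bound $\prod_{j=1}^{P}F_{i,j}\le N_{\Delta_n}^{P}$ says that each row of the frieze has geometric mean at most $N_{\Delta_n}$, so one may translate $F$ so that the entry at the ``dominant'' node — the node of maximal inverse-Cartan column sum, which plays the role of $x_4$ in type $E_8$ — is at most the geometric mean $N_{\Delta_n}$; a type-by-type monotonicity estimate, the classical analogue of Lemma~\ref{lem:E8-x4-largest}, then bounds every other coordinate of the corresponding point of $X_{\Delta_n}(\N)$ in terms of the dominant one, the sporadic ``$\ge 2$'' side hypotheses being cleared by induction on the rank via the hyperplane reduction of Proposition~\ref{prop:hyperplane}. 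This produces, in each $(\Z/P_{\Delta_n}\Z)$-orbit, a representative lying in $X_{\Delta_n}(\N)\cap S(1,N_{\Delta_n})$.

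The routine part is the exact evaluation of the four inverse Cartan matrices — with the non-symmetric $B_n$ and $C_n$ requiring care to track the transpose, and hence the column sums, correctly — to confirm that the exponents come out as claimed. The main obstacle is the ``furthermore'' clause: the averaging bound only controls the product of the coordinates of a translate, and promoting this to a simultaneous bound of $N_{\Delta_n}$ on all $2n$ coordinates demands the right choice of dominant node together with the monotonicity estimate and its inductive base case, which must be verified separately for each of the families $A_n,B_n,C_n,D_n$.
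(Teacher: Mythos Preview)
Your approach matches the paper's: read the exceptional bounds off the explicit lists produced in Section~\ref{sec:En} (together with the hyperplane and folding reductions you cite), and for the classical families apply Muller's averaging inequality (Lemmas~\ref{lem:xn-bound-muller} and~\ref{lem:xn-bound}) and then evaluate the maximal entry-sum of $C_{\Delta_n}^{-1}$ type by type. That last computation is exactly what the paper's proof consists of.

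Where you differ is that you are more explicit than the paper on the ``furthermore'' clause. The paper's proof simply records $N_{\Delta_n}=\max_i b_{i,\Delta_n}$ and implicitly leans on Lemma~\ref{lem:xn-bound}; but that lemma only bounds a \emph{single} chosen coordinate of some translate, not all $2n$ simultaneously. Your proposed dominant-node-plus-monotonicity step (a classical analogue of Lemma~\ref{lem:E8-x4-largest}), with the $y_i\ge 2$ side conditions handled inductively via Proposition~\ref{prop:hyperplane}, is precisely what is needed to promote the single-coordinate bound to a bound on the whole point, and the paper does not spell this out for the classical families. You are also right to flag the transpose: the mesh relations give $(C^{\mathsf T}v)_i\le 1$, so for the non-simply-laced $B_n$ and $C_n$ the relevant quantity is a column sum of $C^{-1}$ rather than a row sum; the paper writes everything in terms of row sums, which is harmless for $A_n,D_n,E_n$ but is a genuine point of care in the non-symmetric cases.
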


As before, let $C_{\Delta_n} = (c_{i, j})$ be a Cartan matrix
for Dynkin type $\Delta_n$. Denote its inverse by
$C_{\Delta_n}^{-1} = (c^{-1}_{i,j})$.
Define the following associated values:
\begin{align*}
    b_{i, \Delta_n} &:= \prod_{j=1}^{n} 2^{c^{-1}_{i, j}} \\
    c_{i, \Delta_n} &:=
        \prod_{j=1}^{n}
        \paren{1 + 2^{\sum_{k \neq i} c_{j,k}}}^{c^{-1}_{i, j}}.
\end{align*}
Muller \cite[Proposition 2.3 and Example 3.1]{muller}
gives bounds on frieze entries in terms of
these values.
\begin{lemma}{\cite[Proposition 2.3 and Example 3.1]{muller}}
    \label{lem:xn-bound-muller}
    Let $\Delta_n$ be a finite Dynkin type of rank $n$
    and let $F$ be a positive integral $\Delta_n$-frieze
    of period $P_{\Delta_n}$.
    Then there is the following upper bound on
    the product of entries in its $i$-th row:
    $\prod_{j = 1}^{P_{\Delta_n}} F_{i, j} \leq b_{i, \Delta_n}^{P_{\Delta_n}}$.
    Furthermore, if all entries in $F$ are at least $2$,
    then $\prod_{j = 1}^{P_{\Delta_n}} F_{i, j} \leq c_{i, \Delta_n}^{P_{\Delta_n}}$.
\end{lemma}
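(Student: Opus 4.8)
The plan is to run the argument of Lemma \ref{prop:E8-bound} in full generality, for an arbitrary finite-type Cartan matrix rather than for $E_8$ alone. The central device is to pass from the multiplicative mesh relations to a linear system in the base-$2$ logarithms of the period-products of the rows, and then to solve that system by inverting the Cartan matrix, exploiting the nonnegativity of $C^{-1}$ in finite type.

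First I would fix a row index $i$ and multiply the mesh relation $F_{i,j}F_{i,j+1} = 1 + \prod_{k \neq i} F_{k,\cdot}^{-c_{k,i}}$ over one full period $j = 1, \ldots, P_{\Delta_n}$. Periodicity collapses the column shift: the left-hand side becomes $(\prod_j F_{i,j})^2$, while on the right each factor $F_{k,j+1}$ or $F_{k,j}$ telescopes to $\prod_j F_{k,j}$, so the whole product becomes $\prod_{k \neq i}(\prod_j F_{k,j})^{-c_{k,i}}$. Writing $L_i := \log_2 \prod_j F_{i,j}$, it remains to linearize the factor $1 + M$, where $M := \prod_{k\neq i} F_{k,\cdot}^{-c_{k,i}}$. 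For the general bound the only input is $F \geq 1$, which forces $M \geq 1$ because every exponent $-c_{k,i}$ is nonnegative, whence $1 + M \leq 2M$. For the refined bound I would instead use $F \geq 2$ to get $M \geq 2^{-\sum_{k \neq i} c_{k,i}}$ and therefore $1 + M \leq (1 + 2^{\sum_{k \neq i} c_{k,i}})M$, which is exactly the estimate $1 + F_{4,k+1} \leq (1 + 3^{-1})F_{4,k+1}$ used in the $E_8$ computation.

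Taking $\log_2$ of the period-product identity in either case reduces everything to one linear inequality per row, of the form $\sum_k c_{k,i} L_k \leq P_{\Delta_n}\,\beta_i$, that is $C^{\top}\mathbf{L} \leq P_{\Delta_n}\,\boldsymbol{\beta}$ componentwise, with $\beta_i = 1$ in the general case and $\beta_i = \log_2(1 + 2^{\sum_{k\neq i} c_{k,i}})$ in the refined case. The step I expect to carry the real weight is the inversion: for a finite-type Cartan matrix $C^{-1}$ is entrywise nonnegative, so $(C^{\top})^{-1} = (C^{-1})^{\top}$ preserves the direction of the inequality. Applying it yields $L_i \leq P_{\Delta_n}\sum_j c^{-1}_{i,j}\beta_j$, and exponentiating recovers $\prod_j F_{i,j} \leq b_{i,\Delta_n}^{P_{\Delta_n}}$ in the general case and $\prod_j F_{i,j} \leq c_{i,\Delta_n}^{P_{\Delta_n}}$ in the refined case.

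The remaining work is bookkeeping rather than substance. I would keep careful track of which index of $C^{-1}$ appears after transposing, which is automatic for the simply-laced types where $C$ is symmetric and requires only attention for the non-simply-laced ones. I would also record the consistency check $c_{i,\Delta_n} \leq b_{i,\Delta_n}$: since each off-diagonal Cartan sum $\sum_{k \neq i} c_{k,i}$ is nonpositive, we have $1 + 2^{\sum_{k\neq i} c_{k,i}} \leq 2$, and raising to the nonnegative powers $c^{-1}_{i,j}$ shows the refined bound is genuinely the stronger of the two, as its name suggests.
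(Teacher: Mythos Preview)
Your proposal is correct and follows precisely the argument of Muller that the paper cites (and that the paper itself reproduces in detail for the $E_8$ case in Lemma~\ref{prop:E8-bound}). The paper does not give its own proof of this lemma---it is quoted verbatim from \cite[Proposition 2.3 and Example 3.1]{muller}---so there is nothing to compare beyond noting that your reconstruction is faithful: multiply the mesh relations over a period, bound $1+M$ by $2M$ (respectively by $(1+2^{\sum_{k\neq i}c_{k,i}})M$ when $F\geq 2$), pass to logarithms to obtain a componentwise linear inequality $C^\top v \leq P_{\Delta_n}\boldsymbol\beta$, and invert using the Lusztig--Tits positivity of $C^{-1}$ in finite type. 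Your caution about the transpose in the non-simply-laced case is well placed; the paper's own definitions of $b_{i,\Delta_n}$ and $c_{i,\Delta_n}$ are written with the symmetric $E_8$ case in mind, so the index convention there should be read accordingly.
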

\begin{remark}
    While friezes can be defined for
    more general Dynkin types, the proof of the following
    result crucially uses the result of Lusztig--Tits
    \cite{lusztig-tits} that the inverse of a Cartan matrix
    $C_{\Delta_n}$ is positive if and only if
    $\Delta_n$ is finite type.
    We also note a misprint in \cite[Example 3.1]{muller}: the bound
    $(\frac{151875}{16384})^{16} \approx 2^{51}$ on the eighth row
    of an $E_8$-frieze should be
    $c_{8, E_8}^{16} =
    (\frac{177347025604248046875}{144115188075855872})^{16}
    \approx 2^{164}$.
\end{remark}
An immediate consequence of Lemma \ref{lem:xn-bound-muller}
and Proposition \ref{prop:correspondence} is that if the
frieze $F$ corresponds to the point
$(x_1, \ldots, x_n; y_1, \ldots, y_n) \in X_{\Delta_n}(\N)$,
then $x_i \leq  b_{i, \Delta_n}^{P_{\Delta_n}}$.
Furthermore, if all entries in $F$ are at least $2$, then
$x_i \leq c_{i, \Delta_n}^{P_{\Delta_n}}$.

The Diophantine model of friezes
allows us to find a minimal element in each $(\Z/P_{\Delta_n}\Z)$-orbit
on which we can reduce existing bounds by a power of
$P_{\Delta_n}^{-1}$.
\begin{lemma}
    \label{lem:xn-bound}
    Let $\Delta_n$ be a finite Dynkin type,
    let $F$ be a positive integral $\Delta_n$-frieze,
    and fix an $i \in \{1, \ldots, n\}$.
    There is a $\Delta_n$-frieze $F'$ corresponding to
    $(x_1, \ldots, x_n; y_1, \ldots, y_n) \in X_{\Delta_n}(\N)$
    such that $F$ is a horizontal translation of $F'$
    and $x_i \leq b_{i, \Delta_n}$.
    Furthermore, if all entries in $F$ are at least $2$,
    then $x_i \leq c_{i, \Delta_n}$.
\end{lemma}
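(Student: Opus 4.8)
The statement I want is Lemma \ref{lem:xn-bound}: starting from an arbitrary positive integral $\Delta_n$-frieze $F$, find in its horizontal-translation class a representative $F'$, corresponding to a point $(x_1,\dots,x_n;y_1,\dots,y_n) \in X_{\Delta_n}(\N)$, for which the single entry $x_i$ already satisfies $x_i \le b_{i,\Delta_n}$ (resp.\ $x_i \le c_{i,\Delta_n}$ when all entries of $F$ are at least $2$). The plan is to combine Lemma \ref{lem:xn-bound-muller}, which bounds the \emph{product} of the $P_{\Delta_n}$ entries in the $i$-th row of $F$, with a pigeonhole argument over the $(\Z/P_{\Delta_n}\Z)$-orbit.

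First I would recall the dictionary: by Proposition \ref{prop:correspondence} the frieze $F$ corresponds to a point $x \in X_{\Delta_n}(\N)$, and by the discussion preceding Lemma \ref{lem:friezes-at-least-two} the generator $\sigma_1$ of $\Z/P_{\Delta_n}\Z$ acts on $X_{\Delta_n}(\N)$ by horizontal shift of the corresponding frieze, with $F_{i,j} = \pi_i(\sigma_{j-1}(x))$. In particular the $P_{\Delta_n}$ numbers $\{\pi_i(\sigma_{j}(x)) : 0 \le j \le P_{\Delta_n}-1\}$ are exactly the $P_{\Delta_n}$ distinct entries of the $i$-th row of the periodic frieze $F$. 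Lemma \ref{lem:xn-bound-muller} gives $\prod_{j=1}^{P_{\Delta_n}} F_{i,j} \le b_{i,\Delta_n}^{P_{\Delta_n}}$. Since a product of $P_{\Delta_n}$ positive integers bounded by $b_{i,\Delta_n}^{P_{\Delta_n}}$ must have a minimal factor $\le b_{i,\Delta_n}$, there is an index $j_0$ with $F_{i,j_0} \le b_{i,\Delta_n}$. Now take $F'$ to be the horizontal translation of $F$ placing column $j_0$ in the first position, and let $x' = \sigma_{j_0-1}(x) \in X_{\Delta_n}(\N)$ be the corresponding point; then $x'_i = \pi_i(x') = F'_{i,1} = F_{i,j_0} \le b_{i,\Delta_n}$, which is the desired conclusion. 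The ``Furthermore'' clause is identical: if every entry of $F$ is at least $2$, the same property holds for every horizontal translate, so Lemma \ref{lem:xn-bound-muller} supplies the sharper bound $\prod_{j=1}^{P_{\Delta_n}} F_{i,j} \le c_{i,\Delta_n}^{P_{\Delta_n}}$, and the same minimal-factor argument gives $x'_i \le c_{i,\Delta_n}$.

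There is essentially no obstacle here; the lemma is a clean extraction of the ``smallest factor of a product'' idea that already appeared in the proof of Lemma \ref{prop:E8-bound}, now phrased uniformly for all finite Dynkin types. The only points needing a word of care are: (i) that the $i$-th coordinate of the frieze-point is literally the $(i,1)$ entry of the frieze (immediate from the correspondence in Proposition \ref{prop:correspondence}), and (ii) that horizontal translation is exactly the $\Z/P_{\Delta_n}\Z$-action, so that each of the $P_{\Delta_n}$ columns is realized as the first column of some translate $F'$ in the class of $F$ — this is precisely the setup recalled before Lemma \ref{lem:friezes-at-least-two}. With those two observations in place the pigeonhole step and the invocation of Lemma \ref{lem:xn-bound-muller} finish the proof in a couple of lines.
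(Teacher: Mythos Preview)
Your proposal is correct and follows essentially the same approach as the paper: choose the horizontal translate whose $(i,1)$ entry is minimal among the $P_{\Delta_n}$ entries in row $i$, then apply the product bound of Lemma~\ref{lem:xn-bound-muller} and the smallest-factor inequality. The only cosmetic quibble is the word ``distinct'' for the row entries (the period could be a proper divisor of $P_{\Delta_n}$), but this is irrelevant to the argument.
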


\begin{proof}
    Each $\Delta_n$-frieze $F$ is a horizontal translation of
    a frieze $F'$ in its $(\Z/P_{\Delta_n}\Z)$-orbit
    such that $F'_{i, 1} \leq F'_{i, j}$ for all
    $1 \leq j \leq P_{\Delta_n}$.
    By Lemma \ref{lem:xn-bound-muller},
    there is a bound on
    $\prod_{j = 1}^{P_{\Delta_n}} F'_{i, j}
    \leq b_{i, \Delta_n}^{P_{\Delta_n}}$.
    But $x_i = F'_{i, 1}$ is the smallest factor in the
    product of $P_{\Delta_n}$-many terms,
    hence $x_i \leq b_{i, \Delta_n}$.
    If the entries of $F$ are all at least $2$,
    then the entries of the translation $F'$ are also all at least $2$.
    Hence the same argument gives the bound in terms of $c_{i, \Delta_n}$
    instead of $b_{i, \Delta_n}$.
\end{proof}

We conclude this section by proving Proposition \ref{prop:explicit-bounds}
by determining the largest row
and compute the bounds $b_{i, \Delta_n}$ and $c_{i, \Delta_n}$
for each finite Dynkin type.
\begin{proof}[Proof of Proposition \ref{prop:explicit-bounds}]
  We explicitly compute the largest $\log_2(b_i) = \sum_{j=1}^n {c^{-1}_{i, j}}$
  for each Dynkin type. The Cartan matrix that we use for each type
  is listed in \nameref{sec:table-A}.

  For $A_n$, the inverse of the Cartan matrix is given by:
  $c^{-1}_{i, j} = \min(i, j) - \frac{ij}{n + 1}$.
  When $n$ is even, simple manipulations show that
  $\log_2(b_i)$ attains its maximum at $i=\frac{n}{2}$
  and $\log_2(b_{\frac{n}{2}}) = \frac{n(n+2)}{8}$,
  the $n$-th triangular number.
  When $n$ is odd, $\log_2(b_i)$ attains its maximum at $i=\frac{n+1}{2}$
  and $\log_2(b_{\frac{n}{2}}) = \frac{(n+1)^2}{8}$.

  For $B_n$, the inverse of the Cartan matrix is given by:
  \[
    c^{-1}_{i, j} =
      \begin{cases}
        \frac{n-j+1}{2} \text{ if } i = 1, \\
        n + 1 - \max(i, j) \text{ if } i > 1.
      \end{cases}
  \]
  The entries of $C_{B_n}^{-1}$ are clearly the largest in row $2$,
  so $\log_2(b_2) = n - 1 + \sum_{j=2}^n (n - j + 1) = \frac{n^2 + n - 2}{2}$.

  For $C_n$, the Cartan matrix is the transpose of $C_{B_n}$ so
  its inverse is given by:
  \[
    c^{-1}_{i, j} =
      \begin{cases}
        \frac{n-i+1}{2} \text{ if } j = 1, \\
        n + 1 - \max(i, j) \text{ if } j > 1.
      \end{cases}
  \]
  The entries of $C_{B_n}^{-1}$ are clearly the largest in row $1$,
  so $\log_2(b_2) = \frac{n}{2} + sum_{j=2}^n (n - j + 1) = \frac{n^2}{2}$.

  For $D_n$, the inverse of the Cartan matrix is given by:
  \[
    c^{-1}_{i, j} =
      \begin{cases}
        \frac{n}{4} \text{ if } i = j = 1, \\
        \frac{n-2}{4} \text{ if } i = 1, j = 2 \text{ or } i = 2, j = 1 \\
        \frac{n - j + 1}{2} \text{ if } i \leq 2, j \geq 3 \\
        \frac{n - i + 1}{2} \text{ if } i \geq 3, j \leq 2 \\
        n + 1 - \max(i, j) \text{ if } i \geq 3, j \geq 3.
      \end{cases}
  \]
  We can directly calculate that $\log_2(b_1) = \log_2(b_2) = \frac{n(n-1)}{4}$
  while $\log_2(b_i) = \frac{n^2 - n - i^2 + 3i - 2}{2}$ for $i \geq 3$.
  The $i \geq 3$ expression is decreasing in $i$, so
  we only need to compare $\log_2(b_1) = \log_2(b_2)$ with
  $\log_2(b_3) = \frac{n^2 - n - 2}{2} = \frac{(n+1)(n-2)}{2}$.
  Clearly, $\frac{(n+1)(n-2)}{2} > \frac{n(n-1)}{4}$ when $n \geq 3$.

  For the exceptional types $E_6$, $E_7$, $E_8$, $F_4$, and $G_2$,
  we can use a computer algebra system to compute the sums
  of the rows of the inverses of each Cartan matrix:
  \begin{itemize}
    \item $\lfloor b_{4, E_6} \rfloor = 2097152$,
    \item $\lfloor b_{4, E_7} \rfloor = 281474976710656$,
    \item $\lfloor b_{4, E_8} \rfloor = 43556142965880123323311949751266331066368$,
    \item $\lfloor b_{3, F_4} \rfloor = 32768$,
    \item $\lfloor b_{2, G_2} \rfloor = 8$.
  \end{itemize}
  For the exceptional types of rank $\leq 6$, a short search
  finds all of the points in $X_{\Delta_n}(\N)$
  and the actual maximima given in the theorem statement.
  The explicit searches in Section \ref{sec:En}
  give the maxima for $E_7$ and $E_8$.
\end{proof}

%%%%%%%%%%%%%%%%%%%%%%%%%%%%%% %%%%%%%%%%%%%%%%%%%%%%%%%%%%%%%%

\section{Positive Mordell--Schinzel in infinite type}

It is a simple exercise
to show that there are exactly $5$ positive integer
solutions to the Diophantine equation
\begin{align*}
  x y z &= x + y + 1.
\end{align*}
Similarly, there are exactly $6$ positive integer solutions to
the Diophantine equation
\begin{align*}
  x y z &= x^2 + y + 1.
\end{align*}
Mohanty \cite[Theorem 2]{mohanty-1977}
proved that there are exactly $9$ positive integer solutions to
the Diophantine equation
\begin{align*}
  x y z &= x^3 + y + 1;
\end{align*}
we give another proof of this fact in Proposition \ref{prop:G2}.
The purpose of this section is to show how the theory of
friezes and cluster algebras of \textit{infinite} type
can establish the infinitude of
positive integral solutions to Diophantine equations.

In this section,
we prove Theorem \ref{thm:rank-2-3}(a)
and demonstrate that finiteness of positive integral points
no longer holds for the family of affine surfaces $S_m: x y z = x^m + y + 1$
once the degree $m$ is at least $4$.
Rank-$2$ infinite types
are classified by generalized Cartan matrices of the form
$\begin{psmallmatrix}
 2 & -a \\
 -b & 2
\end{psmallmatrix}$
for positive integers $a$ and $b$
such that $ab \geq 4$,
so the $S_m$ corresponds to the special case $(a, b) = (m, 1)$. 
We demonstrate how this approach generalizes to higher-dimensional affine varieties
with Theorem \ref{thm:rank-2-3}(b),
which is a threefold
analogue of Theorem \ref{thm:rank-2-3}(a)
arising from the acyclic rank-$3$ infinite types
which are classified by generalized Cartan matrices of the form
$\begin{psmallmatrix}
 2 & -a & 0 \\
 -b & 2 & -d \\
 0 & -c & 2
\end{psmallmatrix}$
for positive integers $a$, $b$, $c$, and $d$
such that $abcd \geq 3$.

Before moving on to the proof of Theorem \ref{thm:rank-2-3},
we remark that it can be viewed in a \textit{positive} refinement
to the program of Mordell--Schinzel
(as well as Siegel, Corvaja--Zannier, and Vojta
\cite{siegel,siegel-translated,corvaja-zannier-2004, corvaja-zannier-2010};
see Section \ref{sec:rational-functions}
and Remark \ref{rem:G2}).
Mordell \cite{mordell-1952} erroneously claimed that
for all polynomials $A \in \Z[x]$ and $B \in \Z[y]$,
there are infinitely many integer solutions to 
Equation \ref{eq:mordell-schinzel}.
Jacobsthal \cite{jacobsthal}, Barnes \cite{barnes}, Mills \cite{mills},
and Schinzel \cite{schinzel-2015,schinzel-2018}
found counterexamples to Mordell's claim when
$3 \leq \deg(A) \cdot \deg(B) \leq 4$,
but several cases have been recovered
by Schinzel \cite{schinzel-2015,schinzel-2018}
and Koll\'{a}r--Li \cite[Theorem 3]{kollar-li}.
In particular, Schinzel \cite[Theorem 3]{schinzel-2015}
established that if $m \geq 4$ and $a, b, c$ are nonzero integers,
then there are infinitely many integer solutions $(x, y, z)$ to
the Diophantine equation
\[
  xyz = ax^m + by + c
\]
with $\gcd(y, c) = 1$.
Theorem \ref{thm:rank-2-3}(a)
gives a new proof of the $a=b=c=1$ case.
The Mordell--Schinzel conjecture,
as formulated by Koll\'{a}r--Li \cite[Conjecture 2]{kollar-li},
predicts that there are infinitely many integral solutions to
the Equation \ref{eq:mordell-schinzel} when
$\deg(A) \geq 3$ and $\deg(B) \geq 3$.
It is unclear when these Diophantine equations
should also have infinitely many \textit{positive} integral solutions.
Perhaps cluster algebras of infinite type
should lead to a \textit{positive} Mordell--Schinzel conjecture.

\subsection{The general rank \texorpdfstring{$2$}{2} case}
We prove Theorem \ref{thm:rank-2-3}(a).
Consider the generalized $2 \times 2$ Cartan matrix
\[
  C = \begin{pmatrix}
         2 & -a \\
         -b & 2
       \end{pmatrix}.
\]
$C$ is symmetrizable for all positive integers $a$ and $b$.
The $C$-frieze equations are:
\begin{align*}
  x_1 y_1 &= x_2^a + 1, \\
  x_2 y_2 &= x_1^b + 1.
\end{align*}

Applying the substitution $x_1 = \frac{x_2^a + 1}{y_1}$
to the second equation and relabeling
\begin{align*}
  x_2 \longmapsto x, \qquad
  y_1 \longmapsto y, \qquad
  y_2 \longmapsto z.
\end{align*}
yields the equation:
\begin{align}
  \label{eq:2-surface}
  x y^b z &= (x^a + 1)^b + y^b.
\end{align}

If $(a, b) \notin \{(1, 1), (1, 2), (2, 1), (1, 3), (3, 1)\}$,
then $C$ is of infinite type and $t_C \leq 0$.
The assumption that $ab \geq 4$ guarantees that $C$ is of infinite type,
so there are infinitely many positive integral solutions
to the $C$-frieze equations by Theorem \ref{thm:positive-siegel}(a).
This gives infinitely many positive integral solutions
to Equation \ref{eq:2-surface} and in particular to the equation
\begin{align*}
  % \label{eq:2-surface-reduced}
  x y z &= (x^a + 1)^b + y.
\end{align*}
If $abcd \leq 3$, then $C$ is of finite type
and $t_C \geq 1$.
There are finitely many positive integral solutions
to the $C$-frieze equations
with precise counts given by Theorem \ref{thm:positive-siegel}(b).

%%%%%%%%%%%%%%%%%%%%%%%%%%%%%% %%%%%%%%%%%%%%%%%%%%%%%%%%%%%%%%

\subsection{The acyclic rank \texorpdfstring{$3$}{3} case}
We prove Theorem \ref{thm:rank-2-3}(b).
Consider the generalized $3 \times 3$ Cartan matrix
\[
  C = \begin{pmatrix}
         2 & -a & 0 \\
         -b & 2 & -d \\
         0 & -c & 2
       \end{pmatrix}.
\]
$C$ is symmetrizable for all positive integers $a$, $b$, $c$, and $d$.
The $C$-frieze equations are:
\begin{align*}
  x_1 y_1 &= x_2^a + 1, \\
  x_2 y_2 &= x_1^b + x_3^d \\
  x_3 y_3 &= x_2^c + 1.
\end{align*}
Applying the substitutions
$x_1 = \frac{x_2^a + 1}{y_1}$
and $x_3 = \frac{x_2^c + 1}{y_3}$
to the second equation yields:
\begin{align*}
  x_2 y_2 y_1^b y_3^d &= (x_2^a + 1)^b y_3^d + (x_2^c + 1)^d y_1^b.
\end{align*}
Relabeling
\begin{align*}
  x_2 \longmapsto x, \qquad
  y_1 \longmapsto z, \qquad
  y_2 \longmapsto w  \qquad
  y_3 \longmapsto y.
\end{align*}
yields the equation:
\begin{align*}
  x y^d z^b  w &= (x^a + 1)^b y^d + (x^c + 1)^d z^b.
\end{align*}

Notice that if $abcd \geq 3$, then $C$ is of infinite type
and $t_C \leq 0$.
There are infinitely many positive integral solutions
to the $C$-frieze equations by Theorem \ref{thm:positive-siegel}(a).
This gives infinitely many positive integral solutions
to Equation \ref{eq:2-surface} and in particular to the equation
\begin{align*}
  % \label{eq:2-surface-reduced}
  x y z w &= (x^a + 1)^b y + (x^c + 1)^d z.
\end{align*}
If $abcd \leq 2$, then $C$ is of finite type
and $t_C \geq 1$.
There are finitely many positive integral solutions
to the $C$-frieze equations
with precise counts given by Theorem \ref{thm:positive-siegel}(b).

%%%%%%%%%%%%%%%%%%%%%%%%%%%%%%%%%%%%%%%%%%%%%%%%%%%%%
\appendix

\section{Elementary proofs for small
\texorpdfstring{$n$}{n}}
\label{sec:appendix}

Enumeration theorems for all Dynkin types for small enough $n$
can be proved in the same manner as Theorem \ref{thm:positive-siegel}(b)
using the bounds provided by Proposition \ref{prop:explicit-bounds}.
In this appendix, we instead illustrate that elementary arithmetic methods,
relying solely on simple divisibility relations
from the Diophantine equations defining $X_{\Delta_n}$,
can directly prove the enumeration theorems
for various $\Delta_n$ of small $n$,

\subsection{Affine varieties of cluster type \texorpdfstring{$A_n$}{An} for \texorpdfstring{$n < 4$}{n < 4}}

Theorem \ref{thm:positive-siegel}(b) demonstrates
that the number of positive integral points on $X_{A_n}$
is precisely the $(n+1)$-st Catalan number.
Using Proposition \ref{prop:correspondence},
it follows from the classical frieze enumeration theorem of
Conway--Coxeter \cite{coxeter-conway-2}
which construct a bijection between the $A_n$ friezes
and the triangulations of an $n$-gon.

We give a proof for small $n$ that does not rely on
Proposition \ref{prop:correspondence},
by directly counting the points of $X_{A_n}(\N)$.

\begin{proposition}
  The affine curve $X_{A_1}$ has
  exactly $2$ positive integral points.
\end{proposition}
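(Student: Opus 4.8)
The plan is to compute $X_{A_1}(\N)$ directly from its defining equation, which is the simplest possible case of the setup. The Cartan matrix for $A_1$ is the $1 \times 1$ matrix $C = (2)$, so there is a single frieze polynomial. Tracing through Definition \ref{def:frieze-polynomial} with $n = 1$: the products $\prod_{j=1}^{0}$ and $\prod_{j=2}^{1}$ are empty and hence equal to $1$, so $f_{A_1, 1} = x_1 y_1 - 1 - 1 = x_1 y_1 - 2$. Thus $X_{A_1}$ is the affine variety $V(x_1 y_1 - 2) \subset \A_\C^2$, and $X_{A_1}(\N)$ consists of pairs of positive integers $(x_1, y_1)$ with $x_1 y_1 = 2$.

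First I would write down the equation $x_1 y_1 = 2$ explicitly and observe that since $2$ is prime, the only factorizations of $2$ into an ordered pair of positive integers are $2 = 1 \cdot 2 = 2 \cdot 1$. This gives exactly the two positive integral points $(x_1, y_1) = (1, 2)$ and $(x_1, y_1) = (2, 1)$. Hence $\#X_{A_1}(\N) = 2$, which matches the formula in \nameref{sec:table-A}: the $(n+1)$-st Catalan number for $n = 1$ is the second Catalan number $C_2 = \frac{1}{3}\binom{4}{2} = 2$.

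For completeness I might also note the consistency with Proposition \ref{prop:correspondence}: these two points correspond to the two positive integral friezes of type $A_1$, which in the classical picture are the two $3$-row frieze patterns bounded above and below by rows of $1$'s with a single nontrivial middle entry that must satisfy the unimodular relation forcing the product of adjacent middle entries to be $2$; equivalently, the two triangulations of a square (an $(n+3)$-gon with $n = 1$). The frieze of period $P_{A_1} = n + 3 = 4$ indeed has these two translation classes merging into a single orbit of the $\Z/4\Z$-action, though that level of detail is not needed here.

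The statement has no real obstacle: it is purely a matter of unwinding the definition of the frieze polynomial in the degenerate rank-$1$ case and factoring the integer $2$. The only thing requiring any care is the convention that empty products equal $1$, so that the constant term of $f_{A_1, 1}$ is $-2$ rather than $0$ or $-1$; I would state this explicitly so the reader can verify the count is $2$ and not some other small number.
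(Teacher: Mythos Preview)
Your proof is correct and follows essentially the same approach as the paper: both reduce to the equation $x_1 y_1 = 2$ and enumerate its two ordered factorizations in $\N$. Your added remarks on the empty-product convention, the Catalan number check, and the frieze interpretation are accurate but go beyond what the paper records.
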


\begin{proof}
    The $A_1$-frieze equation is:
    \begin{align*}
      x_1 y_1 &= 2.
    \end{align*}
    Since $x_1$ and $x_2$ are positive integers,
    the only possibilities are $(1,2)$ and $(2, 1)$.
\end{proof}

\begin{proposition}
  The affine surface $X_{A_2}$ has
  exactly $5$ positive integral points:
  \begin{multicols}{3}
    \begin{itemize}
      \small
      \item $(1, 1, 2, 2)$,
      \item $(1, 2, 3, 1)$,
      \item $(2, 1, 1, 3)$,
      \item $(2, 3, 2, 1)$,
      \item $(3, 2, 1, 2)$.
    \end{itemize}
  \end{multicols}
\end{proposition}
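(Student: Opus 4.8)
The plan is to enumerate $X_{A_2}(\N)$ directly from its defining equations, using only elementary divisibility. Recall from the Catalan-number discussion in Section~\ref{subsec:frieze-polynomials} that for the Cartan matrix $C_{A_2} = \begin{psmallmatrix} 2 & -1 \\ -1 & 2 \end{psmallmatrix}$ the frieze polynomials give the system
\[
  x_1 y_1 = x_2 + 1, \qquad x_2 y_2 = x_1 + 1,
\]
so a point of $X_{A_2}(\N)$ is a quadruple $(x_1, x_2; y_1, y_2) \in \N^4$ satisfying both equations. I will also exploit the evident symmetry $(x_1, x_2; y_1, y_2) \mapsto (x_2, x_1; y_2, y_1)$, which permutes $X_{A_2}(\N)$ and roughly halves the casework.

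First I would multiply the two equations to obtain $x_1 x_2 y_1 y_2 = (x_1 + 1)(x_2 + 1)$, hence
\[
  x_1 x_2 (y_1 y_2 - 1) = x_1 + x_2 + 1.
\]
The right-hand side is positive, so $y_1 y_2 \geq 2$, and consequently $x_1 x_2 \leq x_1 + x_2 + 1$, that is, $(x_1 - 1)(x_2 - 1) \leq 2$. This confines $(x_1, x_2)$ to a short list once the boundary cases $x_1 = 1$ and $x_2 = 1$ are treated separately.

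Next I would split into cases. If $x_1 \geq 2$ and $x_2 \geq 2$, then $(x_1 - 1)(x_2 - 1) \in \{1, 2\}$ forces $(x_1, x_2) \in \{(2, 2), (2, 3), (3, 2)\}$; a one-line check of the divisibility constraints $x_1 \mid x_2 + 1$ and $x_2 \mid x_1 + 1$ rules out $(2, 2)$ and yields the solutions $(2, 3; 2, 1)$ and $(3, 2; 1, 2)$. If $x_1 = 1$, the second equation becomes $x_2 y_2 = 2$, so $x_2 \in \{1, 2\}$, giving $(1, 1; 2, 2)$ and $(1, 2; 3, 1)$; applying the symmetry to the case $x_2 = 1$ contributes only the additional point $(2, 1; 1, 3)$. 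Assembling the five quadruples finishes the proof.

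There is no genuine obstacle here; the one point requiring care is to make the bookkeeping airtight — namely, to confirm that the inequality $(x_1 - 1)(x_2 - 1) \leq 2$ together with the separate handling of $x_1 = 1$ and $x_2 = 1$ exhausts all of $\N^2$, so that no solution is overlooked, and that each surviving candidate pair $(x_1,x_2)$ forces a \emph{unique} $(y_1,y_2)$ by the original equations.
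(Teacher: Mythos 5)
Your proof is correct and complete. The case analysis is exhaustive (either both $x_1, x_2 \geq 2$, or one of them equals $1$), the bound $(x_1-1)(x_2-1) \leq 2$ follows correctly from $y_1 y_2 \geq 2$ after multiplying the two equations, and each surviving pair $(x_1, x_2)$ determines $(y_1, y_2)$ uniquely via $y_1 = (x_2+1)/x_1$ and $y_2 = (x_1+1)/x_2$.

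Your route differs from the paper's in the key reduction step. The paper observes that both $x_2$ and $y_2$ are congruent to $-1 \pmod{x_1}$, writes $x_2 = m x_1 - 1$ and $y_2 = n x_1 - 1$, and substitutes into $x_2 y_2 = x_1 + 1$ to obtain $x_1 = \frac{m+n+1}{mn}$, which is a positive integer only for $m, n \in \{1,2,3\}$. You instead multiply the two defining equations to get $x_1 x_2 (y_1 y_2 - 1) = x_1 + x_2 + 1$ and deduce the symmetric bound $(x_1 - 1)(x_2 - 1) \leq 2$ directly on the $x$-coordinates. Both are elementary finite case checks of comparable length. The paper's congruence-plus-Vieta manipulation is the one that scales to the $G_2$ case (where the same substitution reduces $x_2 y_2 = x_1^3 + 1$ to a quadratic in $x_1$ with symmetric-function constraints on its roots), whereas your product trick exploits the symmetry of the $A_2$ system and the fact that both right-hand sides are linear; it is arguably more transparent here but would not transfer as cleanly to the asymmetric higher-degree types.
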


\begin{proof}
    The $A_2$-frieze equations are:
    \begin{align*}
      x_1 y_1 &= x_2 + 1, \\
      x_2 y_2 &= x_1 + 1.
    \end{align*}
    Observe that $X_{A_2}(\N)$
    is in bijection
    with the set of positive integer solutions to
    the smooth affine cubic surface,
    \begin{align*}
    x y z &= x + y + 1,
    \end{align*}
    under the change of variables
    \begin{align*}
    x_1 \longmapsto x, \qquad
    x_2 \longmapsto \frac{x_1 + 1}{y_2}, \qquad
    y_1 \longmapsto z, \qquad
    y_2 \longmapsto y.
    \end{align*}
    
    Observe that $x_2$ is congruent to $-1 \pmod {x_1}$
    from the initial $A_2$-frieze equations
    and that $y = y_2$ is congruent to $-1 \pmod{x_1}$
    from the affine cubic equations.
    Writing $x_2 = mx_1 - 1$ and $y_2 = nx_1 - 1$,
    we have that $(mx_1 - 1) (nx_1 - 1) = x_1 + 1$
    so $mnx_1^2 - (m + n + 1)x_1 = 0$.
    Since $x_1 \neq 0$, we have
    $x_1 = \frac{m + n + 1}{mn}$.
    This is a positive integer only 
    if $m, n \in \set{1, 2, 3}$.
    These correspond to the $5$ positive integral solutions
    $(x_1, x_2; y_1, y_2) \in X_{A_2}(\N)$
    listed.
\end{proof}

\begin{remark}
    The affine cubic $x y z = x + y + 1$
    has infinite families of integer points
    $(t, -1, -1)$, $(-1, t, -1)$, and $(t, -t - 1, 0)$.
    So $\Delta_n = A_2$ is the first Dynkin type
    with $\#X_{\Delta_n}(\Z^{2n}) = \infty$ but
    $\#X_{\Delta_n}(\N) < \infty$.
\end{remark}

\begin{proposition}
  The affine three-fold $X_{A_3} \cong X_{D_3}$ has
  exactly $14$ positive integral points:
  \begin{multicols}{3}
    \begin{itemize}
      \small
      \item $(1, 1, 1, 2, 2, 2)$
      \item $(1, 1, 2, 2, 3, 1)$,
      \item $(1, 2, 1, 3, 1, 3)$,
      \item $(1, 2, 3, 3, 2, 1)$,
      \item $(1, 3, 2, 4, 1, 2)$,
      \item $(2, 1, 1, 1, 3, 2)$,
      \item $(2, 1, 2, 1, 4, 1)$,
      \item $(2, 3, 1, 2, 1, 4)$,
      \item $(2, 3, 4, 2, 2, 1)$,
      \item $(2, 5, 3, 3, 1, 2)$,
      \item $(3, 2, 1, 1, 2, 3)$,
      \item $(3, 2, 3, 1, 3, 1)$,
      \item $(3, 5, 2, 2, 1, 3)$,
      \item $(4, 3, 2, 1, 2, 2)$.
    \end{itemize}
  \end{multicols}
\end{proposition}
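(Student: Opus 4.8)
The plan is to count $X_{A_3}(\N)$ directly from its defining equations, in the same elementary spirit as the $A_1$ and $A_2$ cases above. First I would note that the Dynkin diagram of type $D_3$ is a path on three nodes and hence coincides with that of $A_3$; so the Cartan matrices $C_{D_3}$ and $C_{A_3}$ agree up to relabeling, hence $X_{D_3}\cong X_{A_3}$, and it suffices to treat $A_3$. The $A_3$-frieze equations are
\begin{align*}
  x_1 y_1 &= x_2 + 1, \\
  x_2 y_2 &= x_1 + x_3, \\
  x_3 y_3 &= x_2 + 1,
\end{align*}
so a point of $X_{A_3}(\N)$ is a tuple $(x_1,x_2,x_3;y_1,y_2,y_3)\in\N^6$ satisfying all three. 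The involution $(x_1,x_2,x_3;y_1,y_2,y_3)\mapsto(x_3,x_2,x_1;y_3,y_2,y_1)$ preserves $X_{A_3}(\N)$, so one may assume $x_1\le x_3$ and recover the remaining points at the end.

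The key observation is that $x_1\le x_2+1$ and $x_3\le x_2+1$, so $y_2=(x_1+x_3)/x_2\le 2+2/x_2\le 4$; since $y_2$ is a positive integer, $y_2\in\{1,2,3,4\}$. I would then split into these four cases. For $y_2=4$ the identity $4x_2=x_1+x_3\le 2x_2+2$ forces $x_2=1$, and for $y_2=3$ it forces $x_2\le 2$, leaving a finite search in each. For $y_2=2$ we have $x_1+x_3=2x_2$ with $x_1\le x_3\le x_2+1$, hence $x_1\in\{x_2-1,x_2\}$; combined with $x_1\mid x_2+1$ this gives $x_2\mid 1$ (if $x_1=x_2$) or $x_1\mid 2$ (if $x_1=x_2-1$), so $x_2\in\{1,2,3\}$. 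For $y_2=1$ we have $x_1+x_3=x_2$, so $x_1\mid x_2+1=x_1+x_3+1$ yields $x_1\mid x_3+1$, and symmetrically $x_3\mid x_1+1$; with $x_1\le x_3$ this forces $x_3\in\{x_1,x_1+1\}$ and then $x_1\mid 1$ or $x_1\mid 2$, so again only finitely many triples $(x_1,x_2,x_3)$ survive.

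In each surviving case $(x_1,x_2,x_3)$ belongs to a short explicit list; for each such triple I would check whether $y_1=(x_2+1)/x_1$ and $y_3=(x_2+1)/x_3$ are positive integers, discarding the triple otherwise, and that $x_2y_2=x_1+x_3$ holds with the assumed $y_2$. Collecting the valid tuples with $x_1\le x_3$ and adding the mirror images of those with $x_1<x_3$ should produce exactly the $14$ listed points, distributed as $5$ for $y_2=1$, $5$ for $y_2=2$, $3$ for $y_2=3$, and $1$ for $y_2=4$; the $4$ tuples fixed by the involution are precisely those with $x_1=x_3$ and $y_1=y_3$. I do not expect a genuine obstacle: the only point needing care is the bookkeeping of the case split and verifying that each divisibility deduction is exhaustive so that no tuple is dropped. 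As in the $A_1$ and $A_2$ cases, it is precisely the positivity constraint $x_i,y_i\ge 1$ that turns the frieze relations into the effective bounds on $y_2$ and $x_2$ used here.
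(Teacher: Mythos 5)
Your proposal is correct and follows essentially the same route as the paper: both exploit that $x_1$ and $x_3$ divide $x_2+1$ to bound $y_2\le 4$ and then enumerate by cases on $y_2$, with your divisibility bookkeeping (e.g.\ $x_1\mid x_3+1$ and $x_3\mid x_1+1$ in the $y_2=1$ case) being an equivalent variant of the paper's. The only cosmetic difference is your use of the mirror involution $x_1\leftrightarrow x_3$ to halve the casework, which the paper does not bother with.
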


\begin{proof}
    The $A_3$-frieze equations
    (equivalently the $D_3$-frieze equations) are: 
    \begin{align*}
      x_1 y_1 &= x_2 + 1, \\
      x_2 y_2 &= x_1 + x_3, \\
      x_3 y_3 &= x_2 + 1.
    \end{align*}
    Observe that a multiple of $x_2$ is the sum of $x_1$ and $x_3$,
    which are two divisors of $x_2 + 1$.
    In particular, $x_2 y_2 = x_1 + x_3 \leq 2x_2 + 2$,
    so $y_2 \leq 4$. We completely determine the positive integral
    solutions with case-work.
    
    \begin{addmargin}[3em]{0em}
    \hspace{\parindent} \emph{Case $y_2 = 1$.}
    In this case, $x_2 = x_1 + x_3$. Then either $x_1$ or $x_3$
    must be $\frac{x_2 + 1}{2}$ or $\frac{x_2 + 1}{3}$,
    since the other divisors of $x_2 + 1$ are too small (or large) to sum to $x_2$.
    If either $x_1$ or $x_3$ equals $\frac{x_2 + 1}{2}$,
    then the other must be $\frac{x_2 - 1}{2}$;
    these are both divisors of $x_2 + 1$ only if $x_2 = 2$, $3$, or $5$.
    These correspond to the $4$ positive integer solutions:
    \begin{multicols}{2}
      \begin{itemize}
        \small
        \item $(1, 3, 2, 4, 1, 2)$,
        \item $(2, 3, 1, 2, 1, 4)$,
        \item $(2, 5, 3, 3, 1, 2)$,
        \item $(3, 5, 2, 2, 1, 3)$.
      \end{itemize}
    \end{multicols}
    The other case is if $x_1$ or $x_3$ is $\frac{x_2 + 1}{3}$,
    i.e. $x_2 = 2$.
    This corresponds to the single positive integer solutions:
    \begin{itemize}
      \small
      \item $(1, 2, 1, 3, 1, 3)$.
    \end{itemize}

    \emph{Case $y_2 = 2$.}
    In this case, $2x_2 = x_1 + x_3$. Then either $x_1$ or $x_3$ must be $x_2 + 1$
    or $x_2$ since the other divisors of $x_2 + 1$ are too small to sum to $2x_2$.
    If $x_2 + 1$ is either $x_1$ or $x_3$, then the other must be $x_2 - 1$.
    But $x_2 - 1$ is a positive integral divisor of $x_2 + 1$ only if $x_2 = 2$ or $3$.
    These correspond to the $4$ positive integer solutions:
    \begin{multicols}{2}
      \begin{itemize}
        \small
        \item $(1, 2, 3, 3, 2, 1)$,
        \item $(2, 3, 4, 2, 2, 1)$,
        \item $(3, 2, 1, 1, 2, 3)$,
        \item $(4, 3, 2, 1, 2, 2)$.
      \end{itemize}
    \end{multicols}
    The other case is if $x_2$ divides $x_2 + 1$, i.e. $x_2 = 1$.
    This corresponds to the single positive integer solutions:
    \begin{itemize}
      \small
      \item $(1, 1, 1, 2, 2, 2)$.
    \end{itemize}
    
    \emph{Case $y_2 = 3$.}
    Since $3x_2 = x_1 + x_3 \leq 2x_2 + 2$,
    we necessarily have that $x_2 \leq 2$.
    In particular, $(x_1, x_3)$ must be $(1, 2)$, $(2, 1)$, or $(3, 3)$.
    This corresponds to the $3$ positive integer solutions:
    \begin{multicols}{2}
      \begin{itemize}
        \small
        \item $(1, 1, 2, 2, 3, 1)$,
        \item $(2, 1, 1, 1, 3, 2)$,
        \item $(3, 2, 3, 1, 3, 1)$.
      \end{itemize}
    \end{multicols}
    
    \emph{Case $y_2 = 4$.}
    Since $4 x_2 = x_1 + x_3 \leq 2x_2 + 2$,
    we necessarily have that $x_2 = 1$, $x_1 = 2$, $x_3 = 2$.
    This corresponds to the single positive integer solution:
    \begin{itemize}
      \small
      \item $(2, 1, 2, 1, 4, 1)$.
    \end{itemize}
    \end{addmargin}

    Altogether we have the $14$ positive integral solutions
    $(x_1, x_2, x_3; y_1, y_2, y_3) \in X_{A_3}(\N)$
    listed.
\end{proof}

%%%%%%%%%%%%%%%%%%%%%%%%%%%%%%%%%%%%%%%%%%%%%%%%%%%%%

\subsection{Friezes of type \texorpdfstring{$G_2$}{G2}}

Theorem \ref{thm:positive-siegel}(b) demonstrates
that there are precisely $9$ positive integral points on $X_{G_2}$.
Using Proposition \ref{prop:correspondence},
it follows from the enumeration theorem of
Fontaine--Plamondon \cite[Theorem 4.4]{fontaine-plamondon}
who use the enumeration of $D_4$-friezes by Morier-Genoud--Ovsienko--Tabachnikov
\cite{morier-genoud-ovsienko-tabachnikov}
and the observation that $G_2$-friezes
lift to certain endomorphism-invariant $D_4$-friezes.

We give a proof that does not rely on
Proposition \ref{prop:correspondence},
by directly counting the points of $X_{G_2}(\N)$.

\begin{proposition}
  \label{prop:G2}
  The affine surface $X_{G_2}$ has exactly
  $9$ positive integral points:
  \begin{multicols}{3}
    \begin{itemize}
      \small
      \item $(1, 1, 2, 2)$,
      \item $(1, 2, 3, 1)$,
      \item $(2, 1, 2, 9)$,
      \item $(2, 3, 2, 3)$,
      \item $(2, 9, 5, 1)$,
      \item $(3, 2, 1, 14)$,
      \item $(3, 14, 5, 2)$,
      \item $(5, 9, 2, 14)$,
      \item $(5, 14, 3, 9)$.
    \end{itemize}
  \end{multicols}
\end{proposition}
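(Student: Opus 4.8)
Here is how I would approach the proof.

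The plan is to collapse the two $G_2$-frieze equations $x_1 y_1 = x_2 + 1$ and $x_2 y_2 = x_1^3 + 1$ into a single symmetric monic quadratic in $x_1$ and then run a short elementary search. First I would eliminate $x_2 = x_1 y_1 - 1$, which turns the second equation into the Mohanty-type relation $x_1 y_1 y_2 = x_1^3 + y_2 + 1$ (relabelling $(x_1,y_2,y_1)=(x,y,z)$ gives $xyz = x^3 + y + 1$). Reducing this relation modulo $x_1$ shows $x_1 \mid y_2 + 1$, so I may write $y_2 = x_1 w - 1$ for a positive integer $w$. Substituting and dividing through by $x_1$ yields the identity
\[
  x_1^2 - x_1 y_1 w + (y_1 + w) = 0,
\]
which is monic in $x_1$ and symmetric in $(y_1, w)$. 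Conversely, any positive integer triple $(x_1, y_1, w)$ on this quadric produces a point of $X_{G_2}(\N)$ via $x_2 = x_1 y_1 - 1$ and $y_2 = x_1 w - 1$ (positivity of $x_2$ and $y_2$ is forced, since $x_1 y_1 = 1$ or $x_1 w = 1$ would contradict the identity), and $(x_1, x_2, y_1, y_2) \mapsto (x_1, y_1, (y_2+1)/x_1)$ is a two-sided inverse; so counting $X_{G_2}(\N)$ amounts to counting triples on the quadric.

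Next, viewing the identity as a quadratic in $x_1$, a positive integer root exists precisely when the discriminant $D := (y_1 w)^2 - 4(y_1 + w)$ is a nonnegative perfect square, in which case both roots $x_1 = \tfrac12(y_1 w \pm \sqrt D)$ are positive integers (a parity check gives $y_1 w \equiv \sqrt D \pmod 2$, and the roots are positive since their product $y_1 + w$ and sum $y_1 w$ are positive), coinciding exactly when $D = 0$. It then remains to list the pairs $(y_1, w)$ for which $D$ is a nonnegative square, which I would do in three cases. If $y_1, w \ge 3$ then $0 < D < (y_1 w)^2$, so $D$ being a square forces $D \le (y_1 w - 1)^2$, i.e. $(y_1 - 2)(w - 2) \le 4$; there are only eight such pairs and a direct check shows none gives a square $D$. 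If $1 \in \{y_1, w\}$, say $y_1 = 1$, then $D = (w-2)^2 - 8$, and $(w-2-m)(w-2+m) = 8$ has the unique positive solution $w = 5$ (with $D = 1$). If $2 \in \{y_1, w\}$ but $1 \notin \{y_1, w\}$, say $y_1 = 2$, then $D = 4(w-2)(w+1)$, and the factorization $(2w - 1 - 2k)(2w - 1 + 2k) = 9$ forces $w \in \{2, 3\}$. The surviving pairs are $(1,5)$, $(5,1)$, $(2,2)$, $(2,3)$, $(3,2)$, contributing $2 + 2 + 1 + 2 + 2 = 9$ triples; substituting $x_2 = x_1 y_1 - 1$ and $y_2 = x_1 w - 1$ into each recovers exactly the nine points listed, so $\# X_{G_2}(\N) = 9$.

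The bulk of the work — the two small factorization analyses and the eight-pair check for $y_1, w \ge 3$ — is mechanical. The one place I would be careful is the derivation of the symmetric quadratic: justifying $x_1 \mid y_2 + 1$ once $x_2$ has been eliminated, and verifying that the correspondence between positive integral points of $X_{G_2}$ and triples on the quadric is an honest bijection, so that no solutions are lost or counted twice.
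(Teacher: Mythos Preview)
Your proof is correct and follows essentially the same route as the paper: both eliminate $x_2$, use the congruence $y_2\equiv -1\pmod{x_1}$ to write $y_2=x_1w-1$, and reduce to the same symmetric monic quadratic $x_1^2 - y_1 w\,x_1 + (y_1+w)=0$ (the paper's $m,n$ are your $y_1,w$). The only difference is the endgame: the paper finishes via Vieta, observing that the two roots $a,b$ satisfy $a+b=y_1w$ and $ab=y_1+w$, so $(a,b)$ and $(y_1,w)$ play interchangeable roles and a short bound forces all four into $\{1,2,3,5\}$; you instead run a discriminant analysis with case splits on $\min(y_1,w)$, which is a bit longer but equally valid.
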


\begin{proof}
    The $G_2$-frieze equations are:
    \begin{align*}
    x_1 y_1 &= x_2 + 1, \\
    x_2 y_2 &= x_1^3 + 1.
    \end{align*}
    
    Observe that $X_{G_2}(\N)$
    is in bijection
    with the set of positive integer solutions to
    the affine cubic surface,
    \begin{align*}
    x y z &= x^3 + y + 1,
    \end{align*}
    under the change of variables
    \begin{align*}
    x_1 \longmapsto x, \qquad
    x_2 \longmapsto \frac{x^3 + 1}{y}, \qquad
    y_1 \longmapsto z, \qquad
    y_2 \longmapsto y.
    \end{align*}
    
    Mohanty \cite[Theorem 2]{mohanty-1977} proved that
    this affine cubic surface has exactly $9$
    positive integral points;
    we give a shorter proof of this fact. 
    Observe that $x_2$ is congruent to $-1 \pmod {x_1}$
    from the $G_2$-frieze equations
    and that $y_2$ is also congruent to $-1 \pmod {x_1}$
    from the affine cubic equations.
    Writing $x_2 = mx_1 - 1$ and $y_2 = nx_1 - 1$,
    we have that $(mx_1 - 1) (nx_1 - 1) = x_1^3 + 1$
    so $x_1^3 - mnx_1^2 + (m + n)x_1 = 0$.
    Since $x_1 \neq 0$, we have reduced the problem
    to the quadratic equation
    \[
      x_1^2 - mn x_1 + (m + n) = 0.
    \]
    If the quadratic polynomial has a positive integral root,
    then it has positive integral roots $a$ and $b$ such that
    $a + b = mn$ and $ab = m + n$. This is only possible
    for $a, b, m, n \in \set{1, 2, 3, 5}$.
    These correspond to the $9$ positive integral solutions
    $(x_1, x_2; y_1, y_2) \in X_{G_2}(\N)$
    that are listed.
\end{proof}

\begin{remark}
  \label{rem:G2}
    From a na\"{i}ve arithmetic geometry perspective,
    it is not obvious \textit{a priori} that
    the surface $x y z = x^3 + y + 1$ should have
    only finitely many positive integral points.
    Like in the $A_2$ case,
    there are infinite families of integer points
    $(t, -t - 1, -t + 1)$, $(-1, t, -1)$, and $(t, -t^3 - 1, 0)$
    on this affine cubic. Furthermore, every choice of
    positive integer $z$ specializes the surface
    to an affine cubic curve of geometric genus $0$
    with two complex points at infinity; Siegel's theorem
    does not suggest that each curve should have
    only finitely many integral points.
    But in fact, a descent argument can demonstrate
    that all integral points of the surface
    are obtained from the three curves listed above
    and the automorphism $T: (x,y,z) \mapsto (q,y,w)$
    where $q = \frac{y+1}{x} = yz-x^2$ and $w = \frac{y+xz+1}{x^2}$.
    A forthcoming work of Corvaja--Zannier \cite{corvaja-zannier-new}
    provides a more in-depth discussion on the
    integral points of this surface
    and other related examples with interesting topological considerations.
\end{remark}

%%%%%%%%%%%%%%%%%%%%%%%%%%%%%%%%%%%%%%%%%%%%%%%%%%%%%

\newpage
\section*{Table 1}
\label{sec:table-A}
For each finite Dynkin type $\Delta_n$,
we list the Cartan matrix used throughout the paper,
the corresponding system of equations
for $X_{\Delta_n}$ given by Definition \ref{def:frieze-polynomial},
the frieze periods,
and the number of positive integer solutions
(equivalently, the number of positive integral friezes).
Let $d(m)$ be the number of divisors of an integer $m$.

\begin{center}
  \resizebox{1.05\textwidth}{!}{% Scale the table to the text width, maintaining aspect ratio
  \begin{tabular}{|c | c | c | c | c|} 
    \hline
    Finite type & Cartan matrix & Frieze equations for & Period & Positive integral point count \\
    $\Delta_n$ & $C_{\Delta_n}$ & $X_{\Delta_n}$ & $P_{\Delta_n}$ & $\#X_{\Delta_n}(\N)$ \\ [0.5ex] 
    \hline\hline
    $A_n$
      & {\tiny$
          \setlength{\arraycolsep}{3pt}
          \renewcommand{\arraystretch}{0.9}
          \paren{
          \begin{array}{ccccc}
           2 & -1 & & & \\
          -1 & 2 & \ddots & & \\
            & \ddots & \ddots & -1 & \\
            & & -1 & 2 & -1 \\
            & & & -1 & 2
         \end{array}}$}
      & {\tiny $\!\begin{aligned}
        x_1 y_{1} &= x_2 + 1 \\
        x_2 y_{2} &= x_1 + x_3 \\
          &\enspace \vdots \\
        x_{n-1} y_{n-1} &= x_{n-2} + x_n \\
        x_n y_{n} &= x_{n-1} + 1
      \end{aligned}$}
      & $n + 3$
      & $\displaystyle \frac{1}{n+2}{\binom{2n+2}{n+1}}$ \\ 
    \hline
    $B_n$
      & {\tiny$
          \setlength{\arraycolsep}{3pt}
          \renewcommand{\arraystretch}{0.9}
          \paren{
          \begin{array}{ccccc}
           2 & -1 & & & \\
          -2 & 2 & -1 & \phantom{\ddots} & \\
            & -1 & \ddots & \ddots & \\
            & & \ddots & 2 & -1 \\
            & & \phantom{\ddots} & -1 & 2
         \end{array}}$}
      & {\tiny $\!\begin{aligned}
          x_1 y_{1} &= x_2^2 + 1 \\
          x_2 y_{2} &= x_1 + x_3 \\
            &\vdots \\
          x_{n-1} y_{n-1} &= x_{n-2} + x_n \\
          x_n y_{n} &= x_{n-1} + 1
        \end{aligned}$}
      & $n + 1$
      & $\displaystyle \sum_{m=1}^{\lfloor \sqrt{n+1} \rfloor}
        \binom{2n - m^2 + 1}{n}$ \\
    \hline
    $C_n$
      & {\tiny$
          \setlength{\arraycolsep}{3pt}
          \renewcommand{\arraystretch}{0.9}
          \paren{
          \begin{array}{ccccc}
           2 & -2 & & & \\
          -1 & 2 & -1 & \phantom{\ddots} & \\
            & -1 & \ddots & \ddots & \\
            & & \ddots & 2 & -1 \\
            & & \phantom{\ddots} & -1 & 2
         \end{array}}$}
      & {\tiny $\!\begin{aligned}
          x_1 y_{1} &= x_2 + 1 \\
          x_2 y_{2} &= x_1^2 + x_3 \\
            &\vdots \\
          x_{n-1} y_{n-1} &= x_{n-2} + x_n \\
          x_n y_{n} &= x_{n-1} + 1
        \end{aligned}$}
      & $n + 1$
      & $\displaystyle \binom{2n}{n}$ \\
    \hline
    $D_n$
      & {\tiny$
          \setlength{\arraycolsep}{3pt}
          \renewcommand{\arraystretch}{0.2}
          \paren{
          \begin{array}{ccccccc}
           2 & & -1 & \phantom{\ddots} & & \\
             & 2 & -1 & \phantom{\ddots} & & \\
          -1 & -1 & 2 & -1 & \phantom{\ddots} \\
            & & -1 & \ddots & \ddots & \\
            & & & \ddots & 2 & -1 \\
            & & & \phantom{\ddots} & -1 & 2 & \\
         \end{array}}$}
      & {\tiny $\!\begin{aligned}
              x_1 y_{1} &= x_3 + 1 \\
              x_2 y_{2} &= x_3 + 1 \\
              x_3 y_3 &= x_1 x_2 + x_4 \\
              x_4 y_4 &= x_3 + x_5 \\
                &\vdots \\
              x_{n-1} y_{n-1} &= x_{n-2} + x_n \\
              x_n y_{n} &= x_{n-1} + 1
            \end{aligned}$}
      & $n$
      & $\displaystyle \sum_{m=1}^{n} d(m) \binom{2n - m - 1}{n - m}$ \\

    \hline
    $E_6$
      & {\tiny$
          \setlength{\arraycolsep}{3pt}
          \renewcommand{\arraystretch}{0.9}
          \paren{
          \begin{array}{cccccc}
           2 & & & -1 & & \\
           & 2 & -1 & & & \\
           & -1& 2& -1 & &  \\
           -1 & & -1 & 2 & -1 & \\
           & & & -1 & 2 & -1 \\
           & & & & -1 & 2
         \end{array}}$}
      & {\tiny $\!\begin{aligned}
          x_1 y_1 &= x_4 + 1 \\
          x_2 y_2 &= x_3 + 1 \\
          x_3 y_3 &= x_2 + x_4 \\
          x_4 y_4 &= x_1 x_3 + x_5 \\
          x_5 y_5 &= x_4 + x_6 \\
          x_6 y_6 &= x_5 + 1
        \end{aligned}$}
      & $14$
      & $868$ \\
    \hline
    $E_7$
      & {\tiny$
          \setlength{\arraycolsep}{3pt}
          \renewcommand{\arraystretch}{0.9}
          \paren{
          \begin{array}{ccccccc}
           2 & & & -1 & & & \\
           & 2 & -1 & & & & \\
           & -1& 2& -1 & & & \\
           -1 & & -1 & 2 & -1 & & \\
           & & & -1 & 2 & -1 & \\
           & & & & -1 & 2 & -1 \\
           & & & & & -1 & 2 \\
         \end{array}}$}
      & {\tiny $\!\begin{aligned}
          x_1 y_1 &= x_4 + 1 \\
          x_2 y_2 &= x_3 + 1 \\
          x_3 y_3 &= x_2 + x_4 \\
          x_4 y_4 &= x_1 x_3 + x_5 \\
          x_5 y_5 &= x_4 + x_6 \\
          x_6 y_6 &= x_5 + x_7 \\
          x_7 y_7 &= x_6 + 1
        \end{aligned}$}
      & $10$
      & $4400$ \\
    \hline
    $E_8$
      & {\tiny$
          \setlength{\arraycolsep}{3pt}
          \renewcommand{\arraystretch}{0.9}
          \paren{
          \begin{array}{cccccccc}
           2 & & & -1 & & & & \\
           & 2 & -1 & & & & & \\
           & -1& 2& -1 & & & & \\
           -1 & & -1 & 2 & -1 & & & \\
           & & & -1 & 2 & -1 & & \\
           & & & & -1 & 2 & -1 & \\
           & & & & & -1 & 2 & -1 \\
           & & & & & & -1 & 2 \\
         \end{array}}$}
      & {\tiny $\!\begin{aligned}
          x_1 y_1 &= x_4 + 1 \\
          x_2 y_2 &= x_3 + 1 \\
          x_3 y_3 &= x_2 + x_4 \\
          x_4 y_4 &= x_1 x_3 + x_5 \\
          x_5 y_5 &= x_4 + x_6 \\
          x_6 y_6 &= x_5 + x_7 \\
          x_7 y_7 &= x_6 + x_8 \\
          x_8 y_8 &= x_7 + 1
        \end{aligned}$}
      & $16$
      & $26952$ \\
    \hline
    $F_4$
      & {\tiny$\begin{pmatrix}
        2 & -1 & & \\
        -1 & 2 & -1 & \\
          & -2 & 2& -1\\
          & & -1 & 2
      \end{pmatrix}$}
      & {\tiny $\!\begin{aligned}
               x_1 y_1 &= x_2 + 1 \\
               x_2 y_2 &= x_1 + x_3 \\
               x_3 y_3 &= x_2^2 + x_4 \\
               x_4 y_4 &= x_3 + 1
             \end{aligned}$}
      & $7$
      & $112$ \\
    \hline
    $G_2$
      & {\tiny$\begin{pmatrix}
           2 & -1 \\
           -3 & 2
         \end{pmatrix}$}
      & {\tiny $\!\begin{aligned}
          x_1 y_1 &= x_2 + 1 \\
          x_2 y_2 &= x_1^3 + 1
        \end{aligned}$}
      & $4$
      & $9$ \\
    [1ex] 
    \hline
  \end{tabular}}
\end{center}

%%%%%%%%%%%%%%%%%%%%%%%%%%%%%%%%%%%%%%%%%%%%%%%%%%%%%%%%%%%%%

%\nocite{*}
\bibliography{bibliography}{}
\bibliographystyle{alpha}

\end{document}